\newtheorem{theorem}{Theorem}
\newtheorem{remark}{Remark}
\newtheorem{lemma}{Lemma}
\newtheorem{proposition}{Proposition}
\DeclareMathOperator*{\rank}{rank}
\DeclareMathOperator*{\conv}{conv}
\newcommand{\cT}{\mathcal{T}}
\newcommand{\cS}{\mathcal{S}}
\newcommand{\cR}{\mathcal{R}}
\newcommand{\cQ}{\mathcal{Q}}
\pgfplotsset{compat=1.18}
\title{Improved Rank-One-Based Relaxations and \\ Bound Tightening Techniques for the Pooling Problem}
\author{Mosayeb Jalilian\thanks{mosayeb.jalilian@neoma-bs.fr, Faculty of Supply Chain and Operations Management,
NEOMA Business School, 76130 Mont-Saint-Aignan, France. This research was conducted when the first author was a master's student at Sabanc{\i} University.} \and Burak Kocuk\thanks{burakkocuk@sabanciuniv.edu, Industrial Engineering Program,  Sabanc{\i} University, Istanbul, Turkey 34956.}
}
\begin{document}

\maketitle

\begin{abstract}
The pooling problem is a classical NP-hard problem in the chemical process and petroleum industries. This problem is modeled as a nonlinear, nonconvex network flow problem in which raw materials with different specifications are blended in some intermediate tanks, and mixed again to obtain the final products with desired specifications. The analysis of the pooling problem is quite an active research area, and different exact formulations, relaxations and restrictions are proposed. In this paper, we focus on a recently proposed rank-one-based formulation of the pooling problem. In particular, we study a recurring substructure in this formulation defined by the set of nonnegative, rank-one matrices with bounded row sums, column sums, and the overall sum. We show that the convex hull of this set is second-order cone representable. In addition, we propose an improved compact-size polyhedral outer-approximation and families of valid inequalities for this set. We further strengthen these convexification approaches with the help of various bound tightening techniques specialized to the instances of the pooling problem. Our computational experiments show that the newly proposed polyhedral outer-approximation can improve upon the traditional linear programming relaxations of the pooling problem in terms of the dual bound. Furthermore, bound tightening techniques reduce the computational time spent on both the exact, linear programming and mixed-integer linear programming relaxations.
\end{abstract}

\noindent{\bf Keywords:}
    pooling problem; convexification; bound tightening; mixed-integer programming

\section{Introduction}\label{sec:introduction}
The classical blending problem that appears in many industrial settings involves determining the optimal blend of raw materials to produce a certain quantity of end products with minimum cost. This problem determines the proportions of the raw materials used in different products considering the incoming raw materials' specifications. The blending problem is polynomially solvable since it can be modeled as a compact-size linear program (LP). 

When the raw materials are blended in intermediate tanks and then mixed again to form the end products, the problem becomes considerably more challenging to solve due to its nonconvex nature. This problem is known as the \textit{pooling problem} and is one of the main problems in the chemical process and petroleum industries. The problem involves three types of tanks: inputs or sources to store raw materials, pools or intermediates to blend incoming flow streams and create new compositions, and outputs or terminals to store the final products.
There are two classes of pooling problems based on the links among the different tanks. The standard pooling problem has no flow stream among the pools, and the flow streams are source-to-terminal, source-to-pool, and pool-to-terminal. A typical {\it standard pooling problem} instance is shown in Figure~\ref{fig:typicalStandardPoolingProblem}. On the other hand, in the generalized pooling problem,  flow streams between the pools are allowed, which make  the problem even more challenging. This class was introduced by \cite{audet711pooling}, and an instance of a simple generalized pooling problem is shown in Figure~\ref{fig:typicalGeneralizedPoolingProblem}.

\begin{figure}[H]
\centering
\begin{minipage}{.5\textwidth}
  \begin{center}
        \begin{tikzpicture}[node distance=1.5cm, circle/.style={draw, circle, inner sep=0pt, minimum size=12mm}]
        \tikzstyle{input} = [draw, ellipse, minimum height=1cm, minimum width=1cm, fill=blue!20]
        \tikzstyle{output} = [draw, ellipse, minimum height=1cm, minimum width=1cm, fill=blue!20]
        \tikzstyle{pool} = [draw, ellipse, minimum height=1cm, minimum width=1cm, fill=blue!20]
        \node [input] (input1) at (0,0) {$s_{1}$};
        \node (input2) [input, below of=input1] {$s_{2}$};
        \node (input3) [input, below of=input2] {$s_{3}$};
        
        \node [pool] (pool1) at (3,-0.75) {$i_{1}$};
        \node (pool2) [pool, below of=pool1] {$i_{2}$};
        
        \node [output] (output1) at (6,-0.75) {$t_{1}$};
        \node (output2) [output, below of=output1] {$t_{2}$};
        
        \draw[->] (input1.east) -- (pool1.west);
        \draw[->] (input2.east) -- (pool1.west);
        \draw[->] (input3.east) -- (pool2.west);
        \draw[->] (input1.east) -- (pool2.west);
        
        \draw[->] (pool1.east) -- (output1.west);
        \draw[->] (pool1.east) -- (output2.west);
        \draw[->] (pool2.east) -- (output1.west);
        
        \end{tikzpicture}
    \end{center}
    \caption{A standard pooling problem instance.}
    \label{fig:typicalStandardPoolingProblem}
\end{minipage}%
\begin{minipage}{.5\textwidth}
  \begin{center}
        \begin{tikzpicture}[node distance=1.5cm, circle/.style={draw, circle, inner sep=0pt, minimum size=12mm}]
        \tikzstyle{input} = [draw, ellipse, minimum height=1cm, minimum width=1cm, fill=blue!20]
        \tikzstyle{output} = [draw, ellipse, minimum height=1cm, minimum width=1cm, fill=blue!20]
        \tikzstyle{pool} = [draw, ellipse, minimum height=1cm, minimum width=1cm, fill=blue!20]
        \node [input] (input1) at (0,0) {$s_{1}$};
        \node (input2) [input, below of=input1] {$s_{2}$};
        \node (input3) [input, below of=input2] {$s_{3}$};
        
        \node [pool] (pool1) at (3,-0.75) {$i_{1}$};
        \node (pool2) [pool, below of=pool1] {$i_{2}$};
        
        \node [output] (output1) at (6,-0.75) {$t_{1}$};
        \node (output2) [output, below of=output1] {$t_{2}$};
        
        \draw[->] (input1.east) -- (pool1.west);
        \draw[->] (input2.east) -- (pool1.west);
        \draw[->] (input3.east) -- (pool2.west);
        \draw[->] (input1.east) -- (pool2.west);
        
        \draw[->] (pool1.east) -- (output1.west);
        \draw[->] (pool1.east) -- (output2.west);
        \draw[->] (pool2.east) -- (output1.west);

        \draw[<->] (pool1.south) -- (pool2.north);
        
        \end{tikzpicture}
    
    \end{center}
    \caption{A generalized pooling problem instance.}
    \label{fig:typicalGeneralizedPoolingProblem}
\end{minipage}
\end{figure}

The analysis of the pooling problem has become an active research area since its introduction by \cite{haverly1978studies}. The nonconvexity of the problem arises due to keeping track of specifications throughout the network, leading to the potential existence of multiple local optima  \citep{alfaki2013multi}. Therefore, researchers have developed various exact formulations, relaxations, and heuristics to solve the problem.

The {\it P-formulation} was introduced by \cite{haverly1978studies}, which modeled the problem using flow and pool attribute quality variables. The authors used the Alternating Method to solve the problem recursively, where an LP model was generated using an estimation of the pool qualities. More recently, \cite{boland2015special} studied a problem consisting of multi-period variables which arises in the mining industry as a special case of the generalized pooling problem based on the {\it P-formulation}.

Another formulation, the {\it Q-formulation}, was proposed by \cite{ben1994global}, which used variables representing the relative proportions of pool input flows instead of the flow variables of pools in the {\it P-formulation}. The authors derived a general principle that can reduce or eliminate the duality gap of a nonconvex program and its Lagrangian dual in some special cases by partitioning the feasible set. They used this principle to compute a near-optimal solution that provides a primal bound for three different versions of the instance produced by \cite{haverly1978studies}.

Additionally, \cite{audet711pooling} proposed a hybrid formulation, which consists of the quality variable from the {\it P-formulation} and the proportion variable from the {\it Q-formulation} in addition to the flow variables. \cite{tawarmalani2002pooling} added some valid constraints to express mass balances across pools, creating the {\it PQ-formulation}. This formulation has proportion variables corresponding to sources and flow variables along the arcs between pools and terminals \citep{alfaki2013strong}. The authors relaxed the new constraints by the convex and concave envelopes \citep{al1983jointly} and proved the dominance of their results using convexification and disjunctive programming.


Furthermore, \cite{alfaki2013strong} introduced a model called the {\it TP-formulation} consisting of the proportion variables corresponding to the terminals and flow variables along with the arcs between sources and pools. They claimed that combining these two proportions (sources and terminals) leads to a new model referred to as the {\it STP-formulation} in which the full benefit is achieved. \cite{boland2016new} extended these approaches for the generalized pooling problem through \textit{source-based} and \textit{terminal-based} multi-commodity flow formulations. \cite{grothey2020effectiveness} introduced the \textit{QQ-formulation}, which only uses proportion variables to solve real-world instances in the animal feed mix industry.

It is worth mentioning that some studies have elaborated on the complexity of the pooling problem. While proving the NP-hardness of the pooling problem, \cite{alfaki2013multi} showed that the problem preserves NP-hardness even if there exists only one pool. \cite{baltean2018piecewise} demonstrated the strongly-polynomial solutions and the NP-hardness of the pooling problem by parameterizing the objective function concerning pool concentrations. 
Meanwhile, the problem remains NP-hard even by having only one quality constraint at each pool or when the number of sources and terminals are no more than two \citep{haugland2016computational}, but there exists a pseudo-polynomial algorithm to solve the problem \citep{haugland2016pooling}. On the other hand, the pooling problem could be polynomial-time solvable if there exists a bounded number of sources \citep{haugland2016pooling,boland2017polynomially}. Moreover, having only one source or one terminal makes the problem polynomially solvable since it can be formulated in the compact form as a linear program \citep{haugland2016computational}.

As can be seen above, the pooling problem is NP-Hard in general, and challenging to solve in practice. This has motivated the researchers to develop relaxations and restrictions for the problem to obtain dual and primal bounds.
The LP relaxations based on the McCormick envelopes \citep{mccormick1976computability} have been widely used in the literature to solve the pooling problem \citep{foulds1992bilinear,tawarmalani2002pooling,alfaki2013multi,boland2015special,dey2020convexifications}. In addition, mixed-integer programming (MIP) models have been developed to generate high-quality bounds as well \citep{adhya1999lagrangian,tomasgard2007optimization,pham2009convex,alfaki2011comparison,dey2015analysis,haugland2016pooling,gupte2017relaxations,gupte2019dynamic}. Furthermore, \cite{marandi2018numerical} conducted a numerical evaluation on the standard pooling problem instances  by applying the sum-of-squares hierarchy \citep{lasserre2017bounded}  via solving   semi-definite programs to construct lower bounds. Although this method has promising results in small instances, the scale of larger instances remains an issue and higher levels of the hierarchy  become computationally expensive. 

\cite{dey2020convexifications} proposed a new formulation for the pooling problem in which the bilinear constraints are replaced with rank-one constraints on the decomposed flow matrix variables related to a pool.  This allowed the authors to develop new relaxations for the pooling problem where the rank-one constraint with side constraints is relaxed. For example, they proved that the convex hull of the set of nonnegative, rank-one  matrices with bounded row (or column) sums and the overall sum is polyhedrally representable.  This translates to a nice interpretation for the pooling problem in which the bounds on row (resp. column) sums can be treated as the bounds on the incoming (resp. outgoing) arcs to a pool and the overall bound can be seen as the bound of the overall flow on the pool. 
We note that investigating the convex hulls of rank-one matrices carries significant implications for optimization, finding relevance in various domains such as machine learning, data analysis, and signal processing (see \cite{burer2017convexify,burer2009nonconvex,gupte2020extended,li2017convex,rahman2019facets,dey2020convexifications}).
In this paper, we also follow this approach and prove that the  convex hull of the set of nonnegative, rank-one  matrices with bounded row sums,  column sums, and the overall sum is second-order cone representable.  

While there have been some notable advancements in the field, most research has focused on the standard pooling problem and has been limited to small to medium problem instances. Moreover, the state-of-the-art solutions do not perform well while the flow streams among the pools are allowed. However, multi-period network flow problems, such as the mining problem associated with large-scale data, can be formulated as a special case of the general pooling problem. 

We aim to address the above challenges in our paper, 
which offers both theoretical and methodological contributions to the pooling problem literature. From the theoretical aspect, we prove that the  convex hull of the set of nonnegative, rank-one  matrices with bounded row sums,  column sums, and the overall sum is second-order cone representable. Although the size of this representation is exponential, it helps us to develop new strong LP relaxations than the well-known \textit{PQ}- and \textit{TP}-relaxations, and valid inequalities based on the reformulation and linearization technique (RLT). From the methodological aspect, we focus on improving both the time and quality of the exact and relaxed models via bound tightening.  To improve the lower and upper bounds on the capacities of the nodes and arcs, we use the optimization-based bound tightening (OBBT) technique. 
In addition, we develop a novel bound-tightening method that leverages the special structure of the mining instances, which are large-scale real-world problems that can be converted to generalized pooling problems. By implementing our method in a few simple steps, we can significantly improve the quality of the dual bounds and obtain the solution in a more reasonable time using the exact formulation.

The rest of this paper is organized as follows: In Section \ref{sec:socp}, we prove that the convex hull of the set of nonnegative, rank-one matrices with bounded row sums, column sums and overall sum is second-order cone representable. In addition, we  develop  polyhedral outer-approximations of this complicated convex hull and propose valid inequalities using RLT. These developments will be the basis of our analysis in the succeeding sections. 
In Section \ref{sec:formulations}, we describe the pooling problem formally and provide multi-commodity flow formulations in detail. Then, we review different polyhedral and MIP-based relaxations for these formulations. In Section \ref{sec:solution}, we present new LP relaxations we have developed. Moreover, we discuss the OBBT technique and how we can use it for the pooling problem. We also provide the details our  tailored bound-tightening method  to improve the bounds on the capacity of the arcs and nodes of the time-indexed pooling problem 
which arises in the mining industry. We present the settings of our different experiments and their computational results in Section \ref{sec:computations}. Finally, we will have some concluding remarks in Section \ref{sec:conclusion}. 
\\

\noindent{\bf Notation:} We denote the set of integers $1,\dots, n$ as $[n]$.
We will use the notation $\cdot$ when a bound is relaxed.
 $e_k$ is the $k$-th unit vector, $e$ is the vector of ones.
 
\section{Main Results} \label{sec:socp}



Let us define the following polyhedral set
\begin{equation}\label{set:convex}
\begin{split}
    \cT (l,u,l',u',L,U) := \bigg \{X \in \mathbb{R}_{+}^{m \times n}:  
     l_i \le \sum_{j=1}^n x_{ij} \le u_i, \ i \in[m] , \  
     l_j' \le  \sum_{i=1}^m x_{ij} \le u'_j, \ j \in [n] ,\\
     L \le \sum_{i=1}^m\sum_{j=1}^n  x_{ij} \le U  \bigg \},
\end{split}
\end{equation}
where $l,u \in \mathbb{R}_{+}^{m} $, $l',u' \in \mathbb{R}_{+}^{n} $ and $L,U \in \mathbb{R}_{+} $ such that $u \ge l$, $u' \ge l'$ and $U \ge L$. Without loss of generality, we assume that $u > 0$ and $u' > 0$ (otherwise, we can simplify the analysis by deleting the row $i$ with $u_i = 0$ and column $j$ with $u_j'=0$).  
In this section, we will focus on the study of the  nonconvex set
\begin{equation}\label{set:nonconvex Rank}
    \tilde \cT(l,u,l',u',L,U) := \left \{X \in  \cT(l,u,l',u',L,U) : \ \rank(X) \le 1  \right \}.
\end{equation}
This nonconvex set appears as a substructure in the pooling problem. As an illustration, consider pool $1$ in Figure~\ref{fig:typicalStandardPoolingProblem}. Let  $X \in \mathbb{R}_{+}^{2 \times 2} $ represent the decomposed flow variables, that is, $x_{ij}$ is the amount of flow originated at node $i$ and terminated at node $j$; $i \in\{s_1,s_2\}$, $j\in\{t_1,t_2\}$. In this case, the sum of row $i$ (resp. column $j$) entries of matrix $X$ is the incoming flow to (resp. outgoing flow from)  this pool from source $i$ (resp. to terminal $j$). Similarly, the   sum of overall entries of $X$ is the total flow at the pool. Due to the special structure of the pooling problem, we require $\rank(X) \le 1$, as this  guarantees that the outgoing flow from the pool will have identical specifications (see \cite{dey2020convexifications} for details).
 
\begin{remark}
The matrix $X$ has a slightly different interpretation in the case of generalized pooling problem. See Figure~\ref{fig:SamplePoolingRelaxations}   and related discussions.
\end{remark}

We study nonconvex set $\tilde \cT$ in three steps: In the first step, we prove that its convex hull is second-order cone representable in Section~\ref{sec:soc-repr}. This is an improvement over \cite{dey2020convexifications}, which showed that the convex hull of the set of nonnegative, rank-one matrices with bounded row  sums or column sums, and overall sum is polyhedrally representable (we call this set as \textit{column-wise relaxation} or \textit{row-wise relaxation}). Unfortunately, the size of our second-order cone  representation is exponential in the size of the matrix dimensions. This has motivated us to find a compact-size outer-approximation of the convex hull. In the second step, we obtain such a polyhedral outer-approximation in Section~\ref{sec:outer}, which is stronger than the intersection of column-wise and row-wise relaxations from \cite{dey2020convexifications}. In the third and final step, we use RLT to further strengthen the polyhedral outer-approximation obtained in the second step with the addition of valid inequalities in Section~\ref{sec:validIneq}.


\subsection{Second-Order Cone Representable Convex Hull}\label{sec:soc-repr}



In this section, we will  prove the following theorem:
\begin{theorem}\label{theorem:main}
$\conv(\tilde \cT(l,u,l',u',L,U) ) $ is second-order cone representable.
\end{theorem}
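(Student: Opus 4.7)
My plan is to parametrize rank-one nonnegative matrices as $X=ab^T$ with $a\in\Re^m_+$ and $b\in\Re^n_+$, and to work with the scalar quantities $A=e^T a$ and $B=e^T b$. In this parametrization the bounds defining $\tilde\cT$ become $l_i\le a_iB\le u_i$, $l_j'\le b_jA\le u_j'$, and $L\le AB\le U$. Normalizing $\hat a=a/A$, $\hat b=b/B$ decouples the rank-one structure into a direction $(\hat a,\hat b)$ on a product of simplices and a scale pair $(A,B)$ living in the hyperbolic region $L\le AB\le U$.

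I would next characterize the extreme points of $\conv(\tilde\cT)$ via a tight-constraint argument. Rank-one nonnegative $m\times n$ matrices form a manifold of dimension $m+n-1$, so any one-parameter extremal family (i.e.\ a curve) requires $m+n-2$ of the bound constraints to be active. Enumerating the combinatorial patterns of which constraints are active --- each either at its lower or upper end --- yields finitely many, but exponentially many in $m$ and $n$. For each such pattern $\pi$, the rank-one matrices meeting it trace a one-parameter curve in $\Re^{m\times n}$, essentially parametrized by the total sum $T=AB$ through a hyperbolic relation in the remaining free pair $(A,B)$.

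For each pattern, the convex hull of the corresponding curve is second-order-cone representable. The curve lives on a rotated-cone surface $\{AB=T : A,B,T\ge 0\}$ intersected with interval constraints on $A,B,T$; its two-dimensional convex hull is cut out by the rotated-cone inequality $AB\ge T$ together with the linear endpoint constraints. This is the prototypical SOC constraint, equivalent to $\|(A-B,\,2\sqrt T)\|_2\le A+B$. Pushing this arc forward to the matrix space via the fixed direction $(\hat a,\hat b)$ on the pattern is a linear map, which preserves SOC representability.

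Finally, I would assemble $\conv(\tilde\cT)$ as the convex hull of the union of these pattern-pieces using a Balas-type disjunctive extended formulation: for each pattern $\pi$, introduce a homogenized copy $X^\pi$ of the matrix variable together with a nonnegative multiplier $\lambda_\pi$, impose the pattern-local SOC constraints in conic (homogeneous) form, and enforce $\sum_\pi\lambda_\pi=1$ and $X=\sum_\pi X^\pi$. The result is an SOC-representable extended formulation of exponential size. The main obstacle I anticipate is the combinatorial step --- identifying precisely which tight-bound patterns produce extreme rank-one pieces, handling degenerate cases where $A$ or $B$ vanishes or where lower and upper bounds coincide, and verifying that the collection of pattern-curves exhausts all extreme points of $\conv(\tilde\cT)$. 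Orchestrating the homogenization so that the exponential family of SOC constraints glues correctly into a single extended formulation will require some additional care.
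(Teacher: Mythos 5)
Your overall architecture coincides with the paper's: restrict attention to extreme points, show they obey a tightness pattern in which all but one row bound and all but one column bound are active, reduce each pattern to a one-dimensional hyperbolic piece whose convex hull is second-order cone representable, and glue the exponentially many pieces with a Balas-type disjunctive extended formulation (the paper instead invokes the fact that the convex hull of a union of compact SOC-representable sets is SOC-representable, which is the same device). However, two of your steps have genuine gaps as written. The dimension count does not characterize extreme points: knowing that the feasible rank-one matrices through a given $X$ form a one-parameter curve does not make $X$ non-extreme --- the extreme points of the convex hull of a circle are the entire circle. What is actually needed is a straight line segment inside $\tilde \cT$ through any $X$ having two row sums (or two column sums) strictly between their bounds. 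The paper's Lemma~\ref{lemma:extrPt} supplies exactly this: writing $X = yz^\top$ and setting $y^{\pm} = y \pm \epsilon e_1 \mp \epsilon e_2$ gives $X^{\pm} = X \pm \epsilon (e_1 - e_2) z^\top$, a genuine segment of feasible rank-one matrices along which the column sums, the overall sum, and all other row sums are unchanged. Your tangent-space count suggests where to look but is not a proof; you need this rank-one-preserving \emph{linear} perturbation or an equivalent.

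Second, the constraint $AB \ge T$ is not convex in $(A,B,T)$ (the points $(0,0,0)$ and $(2,2,4)$ satisfy it while their midpoint $(1,1,2)$ does not), and $\|(A-B,\,2\sqrt{T})\|_2 \le A+B$ is not a second-order cone constraint because $\sqrt{T}$ is not affine in the decision variable $T$; moreover $A=e^\top a$ and $B=e^\top b$ are not well-defined functions of $X$ owing to the scaling gauge in $X = ab^\top$. The correct convex object for a nondegenerate pattern is the hull of an arc of $\{(s,t): st = c\}$ where $s,t$ are specific \emph{affine functions of the matrix entries} --- equivalently, the convex hull of the quadric $x_{ij}x_{IJ} = x_{iJ}x_{Ij}$ intersected with a polytope, as in \eqref{eq:substructure}, for which the paper cites \cite{santana2020}. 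Your hyperbola intuition is right, but the reduction must be carried out in gauge-invariant coordinates. Finally, the degenerate patterns in which all rows (or all columns) except one are forced to zero do not yield hyperbolic arcs at all, since the reference entries used in the reduction vanish; the paper treats them separately (Cases 2 and 3) and shows they collapse to polyhedra. This is easy but cannot be waved off as a detail, since without it the enumeration of pattern pieces does not exhaust the extreme points.
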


Before proving Theorem~\ref{theorem:main}, we remind the reader that  it has been recently proven that the set $\conv(\tilde \cT(l,u,l',u',L,U) ) $ has a polynomial-size polyhedral representation when either row bounds or column bounds  are relaxed \citep{dey2020convexifications}.
\begin{theorem}[\cite{dey2020convexifications}]\label{theorem:onlyRowOrColumn}
We have the following extended formulations:
\begin{itemize}
\item The row-wise formulation is 
$
\conv(\tilde \cT(l,u, \cdot,\cdot,L,U) ) =  \left\{ X \in \mathbb{R}_+^{m \times n}:  \exists t \in \mathbb{R}_+^n : \eqref{eq:extended xt} \right\},
$ where
\begin{equation}\label{eq:extended xt}
\begin{split}
&  l_i t_j \le  x_{ij} \le u_i t_j , \ i \in[m] , j \in[n] , \   L t_j \le \sum_{i=1}^m x_{ij} \le U t_j, \  j \in [n], \ \sum_{j=1}^n t_j = 1 .
\end{split}
\end{equation}
\item The column-wise formulation is 
$
\conv(\tilde \cT(\cdot,\cdot,l',u', L,U) ) =  \left\{ X \in \mathbb{R}_+^{m \times n}: \exists t' \in \mathbb{R}_+^m : \eqref{eq:extended xt'} \right\},
$ where
\begin{equation}\label{eq:extended xt'}
\begin{split}
&  l_j' t_i' \le  x_{ij} \le u_j' t_i' , \ i \in[m] , j \in[n] , \   L t_i' \le \sum_{j=1}^n x_{ij} \le U t_i', \  i \in [m], \ \sum_{i=1}^m t_i' = 1 .
\end{split}
\end{equation}
\end{itemize}
\end{theorem}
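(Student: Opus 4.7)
The plan is to prove the row-wise identity by establishing both set inclusions; the column-wise version then follows by a direct symmetry argument after transposing the matrix. Let $\cP$ denote the projection onto $X$ of the polyhedron defined by \eqref{eq:extended xt}.

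For the forward inclusion $\tilde \cT(l,u,\cdot,\cdot,L,U) \subseteq \cP$, I take any rank-one $X = ab^T \in \tilde \cT$ with $a \in \mathbb{R}_+^m$, $b \in \mathbb{R}_+^n$ and set $B = \sum_{j'} b_{j'}$. In the generic case $B > 0$, define $t_j = b_j/B$, so $t \ge 0$ and $\sum_j t_j = 1$. Substituting $x_{ij} = a_i b_j = (a_i B)\,t_j$ exhibits each entry as the scaling by $t_j$ of the row sum $r_i = a_i B \in [l_i,u_i]$, so the box constraints $l_i t_j \le x_{ij} \le u_i t_j$ hold, and $L t_j \le \sum_i x_{ij} = T t_j \le U t_j$ follows from $T = AB \in [L,U]$. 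The degenerate case $B = 0$ forces $X = 0$, which in turn requires $l = 0$ and $L = 0$, and then any probability vector $t$ trivially satisfies \eqref{eq:extended xt}. Since $\cP$ is the projection of a polyhedron it is convex, so $\conv(\tilde \cT(l,u,\cdot,\cdot,L,U)) \subseteq \cP$.

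For the reverse inclusion $\cP \subseteq \conv(\tilde \cT(l,u,\cdot,\cdot,L,U))$, fix any $(X,t)$ satisfying \eqref{eq:extended xt}. For indices $j$ with $t_j = 0$, the inequalities $l_i t_j \le x_{ij} \le u_i t_j$ and $L t_j \le \sum_i x_{ij} \le U t_j$ squeeze the entire $j$-th column of $X$ to zero. For indices $j$ with $t_j > 0$, define the single-column witness $Y^{(j)} \in \mathbb{R}_+^{m\times n}$ whose $j$-th column equals $x_{\cdot j}/t_j$ and whose other columns vanish. Dividing the relevant inequalities by $t_j$ shows the row sums of $Y^{(j)}$ lie in $[l_i,u_i]$ and its overall sum lies in $[L,U]$; since $Y^{(j)}$ has at most one nonzero column, $\rank(Y^{(j)}) \le 1$, so $Y^{(j)} \in \tilde \cT(l,u,\cdot,\cdot,L,U)$. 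The identity $X = \sum_{j:\,t_j>0} t_j Y^{(j)}$ combined with $\sum_j t_j = 1$ then expresses $X$ as a convex combination of points in $\tilde \cT$.

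The only subtle step is the bookkeeping around indices with $t_j = 0$, where the natural rescaling $x_{\cdot j}/t_j$ is undefined; the structure of \eqref{eq:extended xt} is calibrated precisely so that those columns are forced to vanish and can be dropped from the convex combination harmlessly. Everything else is direct verification, and the crucial rank-one property of each witness $Y^{(j)}$ emerges for free from its single-column support. The column-wise identity follows by applying the row-wise argument to $X^T$ with the roles of the row and column bounds exchanged.
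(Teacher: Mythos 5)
Your proof is correct: the forward inclusion via $t_j = b_j/\sum_{j'}b_{j'}$ (with the $X=0$ case handled separately) and the reverse inclusion via the single-column witnesses $Y^{(j)}$ and the convex combination $X=\sum_{j:t_j>0}t_jY^{(j)}$ are both sound, and the $t_j=0$ columns are indeed forced to vanish by the constraints. Note that the paper itself states this theorem as an imported result from \cite{dey2020convexifications} and gives no proof, so there is nothing internal to compare against; your argument is the standard disjunctive decomposition over which column carries the rank-one support, which is essentially how the cited reference establishes it.
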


We need the following lemma in the proof of Theorem~\ref{theorem:main}.
\begin{lemma} \label{lemma:extrPt}Let $X$ be an extreme point of $ \tilde \cT (l,u,l',u',L,U)$.  Then,
\begin{itemize}
\item
$
\#_{row} := \big|\{ i \in [m] :  l_i < \sum_{j=1}^n x_{ij} < u_i \} \big| \le 1 .
$
\item
$
\#_{col} := \big |\{ j \in [n] :  l'_j < \sum_{i=1}^m x_{ij} < u'_j \} \big| \le 1 .
$
\end{itemize}
\end{lemma}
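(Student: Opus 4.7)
The approach is a standard rank-one perturbation argument. First, I would parameterize any nonzero $X \in \tilde{\cT}(l,u,l',u',L,U)$ as $X = ab^T$ with $a \in \mathbb{R}_+^m\setminus\{0\}$ and $b \in \mathbb{R}_+^n\setminus\{0\}$; such a decomposition exists because $X$ is a nonnegative rank-one matrix. The case $X = 0$ is trivial: all row and column sums are zero, and since $l, l' \ge 0$, no strict lower inequality can hold, so both $\#_{row}$ and $\#_{col}$ are zero.

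For the row claim, suppose toward contradiction that two indices $i_1 \ne i_2$ satisfy $l_{i_k} < \sum_j x_{i_k j} < u_{i_k}$ for $k=1,2$. Writing $B := \sum_j b_j$, the row sum of row $i$ equals $a_i B$, so $a_{i_1}B$ and $a_{i_2}B$ lie strictly in their respective open intervals; this forces $B > 0$ and $a_{i_1}, a_{i_2} > 0$. For a small $\epsilon > 0$, define $a^+$ to agree with $a$ except $a^+_{i_1} = a_{i_1} + \epsilon$, $a^+_{i_2} = a_{i_2} - \epsilon$, and let $a^-$ be the opposite perturbation; set $X^\pm = a^\pm b^T$.

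I would then check that $X^+, X^- \in \tilde{\cT}(l,u,l',u',L,U)$ for sufficiently small $\epsilon$: both have rank at most one by construction; nonnegativity holds whenever $\epsilon < a_{i_2}$; column sums equal $b_j \sum_i a^\pm_i = b_j \sum_i a_i$ and the total sum equals $B \sum_i a_i$, so both are preserved; row sums of rows $i \ne i_1, i_2$ are unchanged; and the two affected row sums remain within their open bound intervals by continuity. Since $b \ne 0$, we have $X^+ \ne X^-$, while $X = \tfrac{1}{2}(X^+ + X^-)$, contradicting extremality of $X$. The column claim follows by the symmetric perturbation on $b_{j_1}, b_{j_2}$ with opposite signs, which preserves $\sum_i a_i$, all row sums, and the total sum.

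The only delicate point, which is minor, is to justify that the perturbed $X^\pm$ are genuine elements of $\tilde{\cT}$: the $+\epsilon/-\epsilon$ perturbation on the rank-one factor must not violate nonnegativity, which is ensured by the strict positivity of $a_{i_2}$ (respectively $b_{j_2}$), and the rank-one structure must be preserved, which is automatic since we perturb only one factor. Everything else is routine continuity.
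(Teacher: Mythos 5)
Your proof is correct and follows essentially the same perturbation argument as the paper: factor $X=yz^{\top}$, perturb the row factor by $\pm\epsilon(e_{i_1}-e_{i_2})$, observe that all column sums, the total sum, and the unaffected row sums are preserved, and conclude $X=\tfrac12(X^++X^-)$ contradicts extremality. Your treatment is in fact slightly more careful than the paper's (you handle $X=0$ explicitly and verify $X^+\neq X^-$); the only nit is that nonnegativity of both perturbations requires $\epsilon<\min(a_{i_1},a_{i_2})$, not just $\epsilon<a_{i_2}$, which is harmless since you established both are positive.
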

\begin{proof}
We only prove the first statement since the proof of the second statement is similar.
 Since $\rank(  X) \leq 1$ and $  X \ge 0$, there exist two non-zero vectors $  y \in \mathbb{R}_+^{n_1}$ and $  z \in \mathbb{R}_{+}^{n_2}$ such that $   X =   y   z^{\top}$. 
By contradiction, suppose that $\#_{row}>1$. Without loss of generality, let us assume that  $l_i < \sum_{j=1}^n x_{ij} < u_i$ for $i=1,2$, which implies that $y_1>0$ and $y_2 > 0$. Now, let us consider the following two points:
\[
X^{\pm} = y^\pm  z^\top \text{ where } y^\pm = y \pm \epsilon e_1 \mp \epsilon e_2.
\]
We have some observations: 
Firstly, the sum of the entries of $y$, $y^+$ and $y^-$ vectors is the same since $e^\top y = e^\top y^+ = e^\top y^-$.
Secondly, the row sums (except the first two) of  $X$, $X^+$, and $X^-$ matrices are the same since $e_i^\top (  y   z^{\top}) e = e_i^\top (  y^+   z^{\top}) e =e_i^\top (  y^-   z^{\top}) e $  for $i \ge 3$.
Thirdly, all the column sums   of  $X$, $X^+$ and $X^-$ matrices are the same since $e^\top (  y   z^{\top}) e_j = e^\top (  y^+   z^{\top}) e_j =e^\top (  y^-   z^{\top}) e_j $  for $j \ge 1$.
Fourthly, all the overall sums   of  $X$, $X^+$ and $X^-$ matrices are the same since $e^\top (  y   z^{\top}) e = e^\top (  y^+   z^{\top}) e =e^\top (  y^-   z^{\top}) e $.

Now, since all the row sums except the first two and all the column sums are unchanged, and the row sum bounds are not tight for the first two rows, we can find small enough $\epsilon > 0$ such that 
 both $X^+$ and $X^-$ belong to $ \tilde \cT (l,u,l',u',L,U)$ for some small enough $\epsilon>0$.
 Hence, since   $X = \frac12 X^+ + \frac12 X^-$ cannot be an extreme point,
 we reach a contradiction to the fact that $\#_{row}>1$.
\end{proof}

Now, we are finally ready to prove Theorem~\ref{theorem:main}.
\begin{proof}[Proof of Theorem~\ref{theorem:main}]
Since $ \tilde \cT (l,u,l',u',L,U)$ is compact,  $\conv(\tilde \cT (l,u,l',u',L,U))$ can be obtained as the convex hull of its extreme points. 

Let us consider a set, denoted by $\cS_{i,j}( \{b_{i'}\}_{i' \neq  i}, \{b'_{j'}\}_{j'\neq j})$, in which all the row bounds  and column bounds (except the $i$-th row and $j$-th column) are equal to $b_{i'}\in\{l_{i'}, u_{i'}\}$, $i'\neq i$ and $b_j'\in\{l'_{j'},u'_{j'}\}$, $j' \neq j$.
Assuming  $\cS_{i,j}( \{b_{i'}\}_{i' \neq  i}, \{b'_{j'}\}_{j'\neq j}) \neq\emptyset$, let $X $ belong to this set. Note that $X$ is an extreme point of $ \tilde \cT (l,u,l',u',L,U)$ due to Lemma~\ref{lemma:extrPt}. Since $\rank(  X) \leq 1$ and $  X \ge 0$, there exist two non-zero vectors $  y \in \mathbb{R}_+^{n_1}$ and $  z \in \mathbb{R}_{+}^{n_2}$ such that $   X =   y   z^{\top}$. 

Note that we have $y_{i'} \sum_{j=1}^n z_{j} = b_{i'}$, $i'\neq i$ and $z_{j'} \sum_{i=1}^m y_{i} = b'_{j'}$, $j'\neq j$. The rest of the proof involves considering three cases:

\noindent {\bf Case 1:} Suppose that there exist $I \neq i$ and $J \neq j$ such that $b_{I} > 0$ and $b'_{J}>0$. 
Then, we obtain the following relations:
\[
y_{i'} = \frac{b_{i'}}{b_I} y_I, \ i' \neq i \ \text{ and } \  z_{j'} = \frac{b'_{j'}}{b'_J} z_J, \ j' \neq j .
\]
As a shorthand notation, we define
\[
B := \sum_{i' \neq i} \frac{b_{i'}}{b_I} , \ \
B' := \sum_{j' \neq j} \frac{b'_{j'}}{b'_J} . 
\]
Considering the $i$-th row, $j$-th column and overall bounds, we obtain the following set of equations in $y$ and $z$,
\begin{equation*}
\begin{split}
&  y_I (z_j +B' z_J) = b_I, 
 \ z_J (y_i +B y_I) = b'_J \\
& y_i (z_j +B' z_J) \in [l_i,u_i],
\ z_j (y_i +B y_I) \in  [l'_j, u'_j], 
\ (y_i +B y_I) (z_j +B' z_J) \in  [L, U] \\
& y_i,y_I,z_j,z_J \ge 0,
\end{split}
\end{equation*}
which can be translated to $X$ variables as follows:
\begin{equation}\label{eq:substructure}
\begin{split}
& x_{ij}x_{IJ}=x_{iJ}x_{Ij} \\
& x_{Ij}+B' x_{IJ} = b_I, \ x_{iJ} +B x_{IJ} = b'_J \\
& x_{ij} +B' x_{iJ} \in [l_i,u_i], \ x_{ij} +B x_{Ij} \in  [l'_j, u'_j], \ x_{ij}+B'x_{iJ}+Bx_{Ij}+BB'x_{IJ} \in[L,U]
\\  
& x_{ij},x_{IJ},x_{iJ},x_{Ij}  \ge 0.
\end{split}
\end{equation}
The set defined by~\eqref{eq:substructure} is the intersection of a quadratic equation with a polytope, and its convex hull is known to be a second-order cone representable set \citep{santana2020}, which we denote by $\cQ_{ij}$. 
Hence, we conclude that the convex hull of $\cS_{i,j}( \{b_{i'}\}_{i' \neq  i}, \{b'_{j'}\}_{j'\neq j})$ is the following second-order cone representable set:
\begin{equation*}\label{eq:PolyC1}
\begin{split}
\left \{X \in \mathbb{R}_{+}^{m \times n}:   
  (x_{ij},x_{IJ},x_{iJ},x_{Ij}) \in \cQ_{ij}, 
  \sum_{j'=1}^n x_{ij}  = b_{i'} \ i'\in[m]\setminus\{i\}, \
  \sum_{i'=1}^m x_{i'j}  = b'_{j'} \  j' \in [n]\setminus\{j\}
 \right\}.
\end{split}
\end{equation*}

\medskip

\noindent {\bf Case 2:} Suppose that  $b_{i'} = 0$ for all $i' \neq i$, meaning that all rows of $X$ (except possibly for the $i$-th one) are  zero vectors. Then, we conclude that
$\cS_{i,j}( \{b_{i'}\}_{i' \neq  i}, \{b'_{j'}\}_{j'\neq j})$ is the following polyhedral set:
\begin{equation*}\label{eq:PolyR}
\begin{split}
\left \{X \in \mathbb{R}_{+}^{m \times n}:  
 l_i \le \sum_{j'=1}^n x_{ij'} \le u_i , \ 
 l'_{j'} \le x_{ij'} \le u'_{j'}  \ j'\in[n] , \
 x_{i'j'} = 0 \ i'\in[m]\setminus\{i\}, j' \in [n]
 \right\}.
\end{split}
\end{equation*}

\medskip

\noindent {\bf Case 3:} Suppose that $b'_{j'} = 0$ for all $j' \neq j$, meaning that all columns of $X$ (except possibly for the $j$-th one) are  zero vectors. Then, $\cS_{i,j}( \{b_{i'}\}_{i' \neq  i}, \{b'_{j'}\}_{j'\neq j})$ is the following polyhedral set:
\begin{equation*}\label{eq:PolyC2}
\begin{split}
\left \{X \in \mathbb{R}_{+}^{m \times n}:  
 l'_j \le \sum_{i'=1}^m x_{i'j} \le u'_j , \
 l_{i'} \le x_{i'j} \le u_{i'}  \  i'\in[m] , \
 x_{i'j'} = 0 \  i'\in [m], j' \in [n]\setminus\{j\}.
 \right\}.
\end{split}
\end{equation*}

In all cases, we conclude that the convex hull of  $\cS_{i,j}( \{b_{i'}\}_{i' \neq  i}, \{b'_{j'}\}_{j'\neq j}$ is second-order cone representable. 
Finally, by using the relation
\[
\conv(\tilde \cT (l,u,l',u',L,U)) = \conv \bigg ( \bigcup_{i\in[m], j\in[n]} \bigcup_{ b_{i'} \in \{l_{i'},u_{i'}\},b'_{j'} \in \{l'_{j'},u'_{j'}\}  } 
\text{conv}\big(  \cS_{i,j}( \{b_{i'}\}_{i' \neq  i}, \{b'_{j'}\}_{j'\neq j}) \big) \bigg),
\]
and utilizing the fact that the convex hull of the union of compact second-order cone representable sets is again second-order cone representable \citep{NemirovskiNotes}, we prove the statement of the theorem.
\end{proof}

\subsection{Polyhedral Outer-approximations}\label{sec:outer}

We proved that $\conv(\tilde \cT(l,u,l',u',L,U) ) $ is second-order cone representable in Theorem~\ref{theorem:main}. However, its exact representation might be quite large. Instead, we will develop some outer approximations of that set in this section.



\subsubsection{A Straightforward Polyhedral Outer-approximation}\label{sec:intersectionRowCol}

A straightforward outer-approximation can be obtained using  Theorem~\ref{theorem:onlyRowOrColumn} as follows: 
\begin{equation}\label{eq:intersectionRowCol}
    \cT^1(l,u,l',u',L,U) :=  \conv(\tilde \cT(l,u, \cdot,\cdot,L,U) ) \cap \conv(\tilde \cT(\cdot,\cdot,l',u', L,U) ) .
\end{equation}
Clearly, the following relation holds,
\[\conv(\tilde \cT(l,u,l',u',L,U) )  \subseteq \cT^1(l,u,l',u',L,U)\]
The set $ \cT^1(l,u,l',u',L,U) $ is the intersection of the row-wise and column-wise extended formulations derived in the previous section. Here, the variable $t_j$ ($t_i'$) represents the ratio of the column sum $j$ (row sum $i$) to the overall sum. We note that, due to the rank condition,  $t_j$ ($t_i'$) also represents the ratio of the entry $x_{ij}$ to the  row sum $i$ (column sum $j$).

We now present some  results related to the set $\cT^1(l,u,l',u',L,U) $. To help us with the illustration, let us define the set
\begin{equation*}
\begin{split}
\tilde \cT^1 (l,u,l',u',L,U) := \bigg \{ X \in \mathbb{R}_+^{m \times n}:  \exists t \in  \mathbb{R}_+^n, & t' \in \mathbb{R}_+^m : \\
& \eqref{eq:extended xt}-\eqref{eq:extended xt'},\\
& \rank(X) \le 1  , \\
& \ x_{ij}=t_j \sum_{j'=1}^n x_{ij'} =t_i' \sum_{i'=1}^m x_{i'j} , \ i \in[m] , j \in[n]  \bigg\}.
\end{split}
\end{equation*}

\begin{proposition} \label{prop: T T1}
$ \cT (l,u,l',u',L,U)  \supseteq   \cT^1 (l,u,l',u',L,U) $.
\end{proposition}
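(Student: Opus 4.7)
The plan is to prove the inclusion by unpacking the extended formulations in \eqref{eq:extended xt} and \eqref{eq:extended xt'} and checking that every defining constraint of $\cT(l,u,l',u',L,U)$ is implied. Nonnegativity of $X$ is immediate from either extended formulation, so the work reduces to verifying the three families of linear bounds: row sums, column sums, and overall sum.

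First, I would take an arbitrary $X \in \cT^1(l,u,l',u',L,U)$ and invoke the row-wise representation from Theorem~\ref{theorem:onlyRowOrColumn}: there exists $t \in \mathbb{R}_+^n$ with $\sum_{j=1}^n t_j = 1$ such that $l_i t_j \le x_{ij} \le u_i t_j$ for all $i,j$ and $L t_j \le \sum_{i=1}^m x_{ij} \le U t_j$ for all $j$. Summing the first family over $j\in[n]$ and using $\sum_j t_j = 1$ immediately yields the row-sum bounds $l_i \le \sum_{j=1}^n x_{ij} \le u_i$ for every $i\in[m]$. Summing the aggregate bound $Lt_j \le \sum_i x_{ij} \le Ut_j$ over $j$ gives $L \le \sum_{i,j} x_{ij} \le U$.

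Next, I would invoke the column-wise representation similarly: there exists $t' \in \mathbb{R}_+^m$ with $\sum_{i=1}^m t'_i = 1$ certifying the constraints in \eqref{eq:extended xt'}. Summing $l'_j t'_i \le x_{ij} \le u'_j t'_i$ over $i\in[m]$ and using $\sum_i t'_i = 1$ yields the column-sum bounds $l'_j \le \sum_{i=1}^m x_{ij} \le u'_j$ for every $j\in[n]$ (the overall sum bounds can be re-derived from this certificate as well, but one derivation suffices). Collecting these inequalities together with $X \ge 0$ shows $X \in \cT(l,u,l',u',L,U)$, completing the inclusion.

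No real obstacle is expected; this is essentially an aggregation argument on the extended formulations. The only thing to be careful about is being clear that the two pieces of the intersection each supply their own auxiliary vector ($t$ and $t'$), and that we only need to aggregate the respective extended inequalities rather than reconcile the two certificates.
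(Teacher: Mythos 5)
Your proposal is correct and follows essentially the same aggregation argument as the paper's proof: sum the extended inequalities in \eqref{eq:extended xt} over $j$ using $\sum_j t_j=1$ to get the row-sum and overall bounds, and sum those in \eqref{eq:extended xt'} over $i$ using $\sum_i t'_i=1$ to get the column-sum bounds. The only cosmetic difference is that the paper re-derives the overall bound from both certificates, whereas you correctly note one derivation suffices.
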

\begin{proof}
Let $X \in \cT^1 (l,u,l',u',L,U) $ and $t,t'$ satisfy the constraints in the description of equation \eqref{eq:intersectionRowCol}.
 
Summing each side of the inequality $ L t_j \le \sum_{i=1}^m x_{ij} \le U t_j $ over $j$ and using $\sum_{j=1}^n t_j = 1$ yield  $ L  \le \sum_{j=1}^n  \sum_{i=1}^m x_{ij} \le U $. 

Summing each side of the inequality $ l_i t_j \le x_{ij} \le u_i t_j $ over $j$ and using $\sum_{j=1}^n t_j = 1$ yield  $ l_i  \le \sum_{j=1}^n x_{ij} \le u_i $  for each $i\in[m]$.

Summing each side of the inequality $ L t_i' \le \sum_{j=1}^n x_{ij} \le U t_i' $ over $i$ and using $\sum_{i=1}^m t_i' = 1$ yield  $ L  \le  \sum_{i=1}^m \sum_{j=1}^n  x_{ij} \le U $. 

Summing each side of the inequality $ l_j' t_i' \le x_{ij} \le u_j' t_i' $ over $i$ and using $\sum_{i=1}^m t_i' = 1$ yield  $ l_j' \le  \sum_{i=1}^m   x_{ij} \le u_j' $ for each $j\in[n]$. 

Hence, we conclude that   $X \in  \cT (l,u,l',u',L,U)$.
\end{proof}

\begin{proposition}
$ \tilde \cT (l,u,l',u',L,U)  =  \tilde \cT^1 (l,u,l',u',L,U) $.
\end{proposition}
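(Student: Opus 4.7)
The plan is to prove the two inclusions separately, with one direction essentially free and the other requiring an explicit construction of the witnesses $t$, $t'$.

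The inclusion $\tilde\cT^1(l,u,l',u',L,U) \subseteq \tilde\cT(l,u,l',u',L,U)$ follows immediately from Proposition~\ref{prop: T T1}: any $X \in \tilde\cT^1$ lies in $\cT^1 \subseteq \cT$, and the rank-one condition $\rank(X)\le 1$ is built into the definition of $\tilde\cT^1$, so $X \in \tilde\cT$.

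For the reverse inclusion $\tilde\cT \subseteq \tilde\cT^1$, I would start with $X \in \tilde\cT(l,u,l',u',L,U)$ and construct the witnesses $t \in \mathbb{R}_+^n$ and $t' \in \mathbb{R}_+^m$ required by the definition of $\tilde\cT^1$. Since $X \ge 0$ and $\rank(X)\le 1$, write $X = y z^\top$ with $y \in \mathbb{R}_+^m$, $z \in \mathbb{R}_+^n$, and let $R_i = \sum_j x_{ij}$, $C_j = \sum_i x_{ij}$, and $S = \sum_{i,j} x_{ij}$ denote the row, column, and total sums. In the generic case $S > 0$, the natural normalization is
\[
t_j = C_j/S, \qquad t'_i = R_i/S,
\]
which yields $t,t' \ge 0$, $\sum_j t_j = 1$, and $\sum_i t'_i = 1$. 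The rank-one identity $C_j R_i = z_j (e^\top y)\cdot y_i (e^\top z) = x_{ij}\, S$ then gives $x_{ij} = t_j R_i = t'_i C_j$, which is exactly the coupling constraint in the definition of $\tilde\cT^1$. With this identity, each inequality in \eqref{eq:extended xt}--\eqref{eq:extended xt'} reduces to a bound on $R_i$, $C_j$, or $S$ scaled by the appropriate $t_j$ or $t'_i$: for instance, $l_i t_j \le x_{ij} \le u_i t_j$ becomes $l_i t_j \le R_i t_j \le u_i t_j$, which follows from the row-sum bounds $l_i \le R_i \le u_i$ in $\tilde\cT$, and $L t_j \le \sum_i x_{ij} \le U t_j$ becomes $L t_j \le S t_j \le U t_j$, which follows from $L \le S \le U$; the column-side inequalities are verified identically.

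The only obstacle, and it is minor, is the degenerate case $S = 0$, in which $X = 0$. Here every row and column sum vanishes, and since the bounds are nonnegative this forces $l = 0$, $l' = 0$, and $L = 0$, so all lower-bound inequalities in \eqref{eq:extended xt}--\eqref{eq:extended xt'} become trivial. I would simply take $t = e_1 \in \mathbb{R}_+^n$ and $t' = e_1 \in \mathbb{R}_+^m$; the probability-vector conditions hold, the coupling identity $x_{ij} = t_j\cdot 0 = t'_i\cdot 0$ is trivial, and the remaining upper-bound inequalities reduce to $0 \le u_i t_j$, $0 \le U t_j$, and their column analogues, all of which are immediate. This completes the argument.
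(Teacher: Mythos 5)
Your proof is correct and follows essentially the same route as the paper: the easy inclusion via Proposition~\ref{prop: T T1}, and the reverse inclusion by constructing $t_j$ and $t'_i$ as the normalized column and row sums (the paper writes $t_j = x_{ij}/\sum_{j'}x_{ij'}$, which equals your $C_j/S$ under the rank-one identity), then scaling the defining inequalities of $\tilde\cT$; the $X=0$ degenerate case is handled identically. No gaps.
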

\begin{proof}
$ \tilde \cT (l,u,l',u',L,U) \supseteq   \tilde \cT^1 (l,u,l',u',L,U) $: It follows directly from Proposition \ref{prop: T T1}.

$ \tilde \cT (l,u,l',u',L,U)  \subseteq  \tilde \cT^1 (l,u,l',u',L,U) $: 
Let $X \in \tilde \cT (l,u,l',u',L,U)$. If $\rank(X)=0$, meaning that $X=0$, then we can simply set $t_1=1$ and $t'_1=1$. Since all the lower bounds have to be zero in this case (otherwise, $X=0$ would not have been feasible), it is trivial to see that $ X  \in \tilde \cT^1 (l,u,l',u',L,U)$.

On the other hand, if $\rank(X)=1$, then we set 
\[
t_j := \frac{x_{ij}}{\sum_{j'=1}^n x_{ij'}}  \quad \text{ and }\quad   t_i' := \frac{x_{ij}}{\sum_{i'=1}^m x_{i'j}} \quad \quad i \in[m], j \in[n].
\]
Observe that these definitions are well-defined since $\rank(X) = 1$. 
We trivially obtain $\sum_{j=1}^n t_j = 1$ and $\sum_{i=1}^m t_i' = 1 $. 

Multiplying each side of the inequality  $ L  \le \sum_{i=1}^m \sum_{j'=1}^n  x_{ij'} \le U $ by $t_j$ and replacing $\sum_{j'=1}^n  x_{ij'} t_j$ by $ x_{ij}$ yield $ L t_j \le \sum_{i=1}^m x_{ij} \le U t_j $ for each $j\in[n]$.

Multiplying each side of the inequality  $l_i  \le \sum_{j'=1}^n x_{ij'} \le u_i $ by $t_j$ and replacing $\sum_{j'=1}^n  x_{ij'} t_j$ by $ x_{ij}$ yield $ l_i t_j \le  x_{ij} \le u_i t_j $ for each $i\in[m]$, $j\in[n]$.

Multiplying each side of the inequality  $ L  \le  \sum_{j=1}^n  \sum_{i'=1}^m x_{i'j} \le U  $ by $t_i'$ and replacing $\sum_{i'=1}^m  x_{i'j} t_i'$ by $x_{ij}$ yield $ L t_i' \le \sum_{j=1}^n x_{ij} \le U t_i' $ for each $i\in[m]$.

Multiplying each side of the inequality  $ l_j' \le  \sum_{i'=1}^m   x_{i'j} \le u_j'  $ by $t_i'$ and replacing $\sum_{i'=1}^m  x_{ij'} t_i'$ by $ x_{ij}$ yield $ l_j' t_i' \le  x_{ij} \le u_j' t_i'  $ for each $i\in[m]$, $j\in[n]$.

Hence, we conclude that   $ X  \in \tilde \cT^1 (l,u,l',u',L,U)$.
\end{proof}

\subsubsection{A Stronger Polyhedral Outer-approximation}\label{sec:strongerPolyRelax}

Since the convex relaxation $ \cT^1 (l,u,l',u',L,U)$  is the intersection of the row-wise and column-wise extended formulations, it is natural to think of another extended formulation that considers rows and columns simultaneously. We now propose  a stronger polyhedral outer approximation. 

Let us define $ \cT^2 (l,u,l',u',L,U) := \{ X \in \mathbb{R}_{+}^{m \times n} : \exists R \in  \mathbb{R}_{+}^{m \times n}: \eqref{eq:extended xr} \}$ where
\begin{equation}\label{eq:extended xr}
\begin{split}
&  l_i  \sum_{i'=1}^m r_{i'j} \le  x_{ij} \le u_i  \sum_{i'=1}^m r_{i'j}  , \ i \in[m] , j \in[n] , \   L r_{ij} \le   x_{ij} \le U  r_{ij}  , \ i \in[m] , j \in[n] , \\ 
&  l_j' \sum_{j'=1}^n r_{ij'}  \le  x_{ij} \le u_j' \sum_{j'=1}^n r_{ij'}  , \ i \in[m] , j \in[n] , \   \sum_{i=1}^m \sum_{j=1}^n r_{ij} = 1   ,
\end{split}
\end{equation}
and
\begin{equation}\label{eq:extended xr + rank}
    \begin{split}
        \tilde \cT^2 (l,u,l',u',L,U)  := \bigg \{ X \in \mathbb{R}_{+}^{m \times n} : \exists R \in  \mathbb{R}_{+}^{m \times n}: \eqref{eq:extended xr} , &\ \rank(X) \le 1  ,\\
        &\ x_{ij} = r_{ij} \sum_{i'=1}^m \sum_{j'=1}^n x_{i'j'}  , \ i \in[m] , j \in[n]  \bigg \}
    \end{split}
\end{equation}

The variable $r_{ij}$   represents the ratio of the entry $x_{ij}$ to the overall sum. Intuitively, the relationships between the $r$ variables appeared in \eqref{eq:extended xr}, and the $t,t'$ variables appeared in \eqref{eq:extended xt}--\eqref{eq:extended xt'} are given as 
\[
r_{ij} = t_i't_j, \  t_i' = \sum_{j'=1}^n r_{ij'}  \text{ and }  t_j = \sum_{i'=1}^n r_{i'j} .
\]
Now, we will compare the relaxed extended formulations $T^1  (l,u,l',u',L,U) $ and $T^2  (l,u,l',u',L,U) $:
\begin{proposition} \label{prop: T1 T2}
$ \cT^1  (l,u,l',u',L,U) \supseteq \cT^2  (l,u,l',u',L,U) $.
\end{proposition}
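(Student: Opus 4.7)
The plan is to prove the inclusion by taking an arbitrary $X \in \cT^2(l,u,l',u',L,U)$, together with its certifying matrix $R$, and exhibiting auxiliary vectors $t$ and $t'$ that certify $X \in \cT^1(l,u,l',u',L,U)$ as the intersection of the row-wise and column-wise extended formulations. The construction is dictated by the intuition the authors already gave just before the proposition, namely $t_j = \sum_{i'} r_{i'j}$ and $t_i' = \sum_{j'} r_{ij'}$; so my first step is simply to define these and check that the candidate is well-posed.

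Concretely, given $R \in \mathbb{R}_+^{m\times n}$ satisfying \eqref{eq:extended xr}, I set
\[
t_j := \sum_{i'=1}^m r_{i'j}, \quad j \in [n], \qquad t_i' := \sum_{j'=1}^n r_{ij'}, \quad i \in [m].
\]
Both are nonnegative, and the two normalizations $\sum_j t_j = 1$ and $\sum_i t_i' = 1$ fall out immediately from the single identity $\sum_{i,j} r_{ij} = 1$ in \eqref{eq:extended xr}. This is the only place where the overall-sum constraint of $\cT^2$ is really used, and it is the point I would double-check carefully.

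The second step is to verify the four families of inequalities in \eqref{eq:extended xt}--\eqref{eq:extended xt'}. The two entry-level families $l_i t_j \le x_{ij} \le u_i t_j$ and $l'_j t_i' \le x_{ij} \le u'_j t_i'$ follow directly by substituting the definitions of $t_j$ and $t_i'$ into the first and third lines of \eqref{eq:extended xr}, i.e.\ they hold \emph{verbatim} with no summation needed. The two aggregated families $L t_j \le \sum_i x_{ij} \le U t_j$ and $L t_i' \le \sum_j x_{ij} \le U t_i'$ are obtained from the entrywise bounds $L r_{ij} \le x_{ij} \le U r_{ij}$ by summing over the row index $i$ (for the first) and over the column index $j$ (for the second).

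Putting the two halves together, $(X,t)$ satisfies \eqref{eq:extended xt} and $(X,t')$ satisfies \eqref{eq:extended xt'}, so $X$ lies in both $\conv(\tilde\cT(l,u,\cdot,\cdot,L,U))$ and $\conv(\tilde\cT(\cdot,\cdot,l',u',L,U))$ and hence in their intersection $\cT^1(l,u,l',u',L,U)$. I do not expect a genuine obstacle here: the variables $r_{ij}$ in \eqref{eq:extended xr} were engineered so that their row and column marginals reproduce exactly the $t_j$ and $t_i'$ of the two weaker formulations, and the proof is essentially a bookkeeping exercise. The only mildly nontrivial point is noticing that a single overall normalization on $R$ suffices to imply both marginal normalizations, which is the structural reason $\cT^2$ can be strictly tighter than $\cT^1$.
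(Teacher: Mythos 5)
Your proof is correct and follows essentially the same route as the paper's: define $t_j$ and $t_i'$ as the column and row marginals of $R$, derive both normalizations from $\sum_{i,j} r_{ij}=1$, read off the entrywise bounds directly from \eqref{eq:extended xr}, and obtain the aggregated bounds by summing $L r_{ij} \le x_{ij} \le U r_{ij}$ over $i$ and over $j$ respectively. No differences worth noting.
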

\begin{proof}
Let $X \in  \cT^2 (l,u,l',u',L,U)$ and $R$ satisfy the constraints in equation \eqref{eq:extended xr}. Set 
\[
t_j := \sum_{i'=1}^m r_{i'j} \quad \text{ and }\quad  t_i' := \sum_{k'=1}^m r_{ij'} \quad \quad i \in[m], j \in[n].
\]
By construction, we have
\[\sum_{j=1}^n t_j = 1 \quad \text{and} \quad \sum_{i=1}^m t_i' = 1.\]
Also, 
\[l_i  \sum_{i'=1}^m r_{i'j} \le  x_{ij} \le u_i  \sum_{i'=1}^m r_{i'j} \implies l_i  t_j \le  x_{ij} \le u_i  t_j,\]
\[l_i' \sum_{k'=1}^m r_{ij'}  \le  x_{ij} \le u_i' \sum_{k'=1}^m r_{ij'} \implies l_i' t_i' \le  x_{ij} \le u_i' t_i'.\]  

Consider the inequality $ L r_{ij} \le   x_{ij} \le U  r_{ij}$;

Summing each side of it over $i$ and using $t_j = \sum_{i=1}^m r_{ij} $ yield $L t_j  \le   \sum_{i=1}^m x_{ij} \le U t_j, \quad \forall j \in [n].$

Summing each side of it over $j$ and using $ t_i' = \sum_{k=1}^m r_{ij}$ yield $L t_i'  \le   \sum_{j=1}^n x_{ij} \le U t_i', \quad \forall i \in [m].$

Hence, we conclude that   $X\in \cT^1 (l,u,l',u',L,U)$.
\end{proof}

\begin{proposition}
$ \tilde \cT (l,u,l',u',L,U)  =  \tilde \cT^2 (l,u,l',u',L,U) $.
\end{proposition}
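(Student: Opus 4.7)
The plan is to mirror the two-direction strategy used in the preceding proposition for $\tilde \cT = \tilde \cT^1$: derive one inclusion from the polyhedral containments already proved, and establish the other by an explicit construction using a rank-one decomposition of $X$.

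For the direction $\tilde \cT^2(l,u,l',u',L,U) \subseteq \tilde \cT(l,u,l',u',L,U)$, I would simply chain the propositions. If $X \in \tilde \cT^2$, then there exists $R$ such that $(X,R)$ satisfies \eqref{eq:extended xr}, so $X \in \cT^2$. Combining Propositions~\ref{prop: T1 T2} and~\ref{prop: T T1} gives $\cT^2 \subseteq \cT^1 \subseteq \cT$, hence $X \in \cT$. Since $\rank(X) \le 1$ is imposed by the definition of $\tilde \cT^2$, I immediately obtain $X \in \tilde \cT$; no additional calculation is required.

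For the reverse inclusion $\tilde \cT(l,u,l',u',L,U) \subseteq \tilde \cT^2(l,u,l',u',L,U)$, I would construct a suitable $R \in \mathbb{R}_+^{m \times n}$ from $X$. If $\rank(X) = 0$, i.e., $X = 0$, then membership in $\tilde \cT$ forces $l = 0$, $l' = 0$, and $L = 0$; any nonnegative $R$ with $\sum_{i,j} r_{ij} = 1$ then satisfies \eqref{eq:extended xr} together with the linkage $x_{ij} = r_{ij} \sum_{i',j'} x_{i'j'}$ trivially. Otherwise $\rank(X) = 1$, and I would set
\[
r_{ij} := \frac{x_{ij}}{S}, \qquad S := \sum_{i'=1}^m \sum_{j'=1}^n x_{i'j'} > 0,
\]
so that $\sum_{i,j} r_{ij} = 1$ and the linkage $x_{ij} = r_{ij} S$ holds by construction. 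To verify the three pairs of bound inequalities in \eqref{eq:extended xr}, I would write $X = y z^\top$ with nonzero $y \in \mathbb{R}_+^m$, $z \in \mathbb{R}_+^n$, so that $\sum_{i'} r_{i'j} = z_j \sum_{i'} y_{i'}/S$ and $\sum_{j'} r_{ij'} = y_i \sum_{j'} z_{j'}/S$. After canceling $z_j$ (resp.\ $y_i$) when positive, each pair collapses to one of the defining constraints of $\cT$: the $(l_i, u_i)$ pair reduces to $l_i \le \sum_{j'} x_{ij'} \le u_i$, the $(l_j', u_j')$ pair to $l_j' \le \sum_{i'} x_{i'j} \le u_j'$, and the $(L, U)$ pair to $L \le S \le U$, all of which hold because $X \in \cT$.

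The only delicate point is the degenerate subcase within the rank-one branch where some $y_i$ or $z_j$ equals zero, since the cancellation above is then illegal. This is harmless, however, because the corresponding row or column of $X$ vanishes and the relevant expression $\sum_{i'} r_{i'j}$ or $\sum_{j'} r_{ij'}$ also vanishes, making both sides of the corresponding inequality equal to zero. I do not foresee any serious obstacle; the argument is essentially mechanical bookkeeping analogous to the $\tilde \cT = \tilde \cT^1$ proof, with the product $t_i' t_j$ replaced throughout by the single quantity $r_{ij}$.
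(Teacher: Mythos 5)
Your proposal is correct and follows essentially the same route as the paper: the easy inclusion via chaining Propositions~\ref{prop: T1 T2} and~\ref{prop: T T1}, and the reverse inclusion by splitting on $\rank(X)\in\{0,1\}$ and setting $r_{ij}=x_{ij}/\sum_{i',j'}x_{i'j'}$ in the rank-one case. Your verification via the explicit factorization $X=yz^{\top}$ with cancellation (and the explicit treatment of zero rows/columns) is a minor cosmetic variant of the paper's argument, which instead multiplies the defining inequalities of $\cT$ by the appropriate sums of $r$-entries.
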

\begin{proof}
$ \tilde \cT (l,u,l',u',L,U)  \supseteq  \tilde \cT^2 (l,u,l',u',L,U) $: It follows directly from Propositions \ref{prop: T T1} and \ref{prop: T1 T2}.

$ \tilde \cT  (l,u,l',u',L,U) \subseteq  \tilde \cT^2 (l,u,l',u',L,U) $:
Let $X \in \tilde \cT (l,u,l',u',L,U)$. {If $\rank(X)=0$, meaning that $X=0$, then we can simply set $ r_{11}=1$. Since all the lower bounds have to be zero in this case (otherwise, $X=0$ would not have been feasible), it is trivial to see that $ X  \in \tilde \cT^2 (l,u,l',u',L,U)$. On the other hand, if $\rank(X)=1$, then we} set 
\[
r_{ij} := \frac{x_{ij}}{\sum_{i'=1}^m \sum_{j'=1}^n x_{i'j'}}  \quad  \quad  i \in[m], j \in[n].
\]
We trivially obtain $\sum_{i=1}^m \sum_{j=1}^n r_{ij} = 1$. 
Since $\rank(X) = 1$, we also have
\[
\sum_{i'=1}^m r_{i'j} = \frac{\sum_{i'=1}^m x_{i'j} }{\sum_{j'=1}^n\sum_{i'=1}^m x_{i'j'}}  = \frac{x_{ij} }{\sum_{j'=1}^n x_{ij'}} \quad \text{ and } \quad
\sum_{j'=1}^n r_{ij'} = \frac{\sum_{j'=1}^n x_{ij'} }{\sum_{i'=1}^m \sum_{j'=1}^nx_{i'j'}}  = \frac{x_{ij} }{\sum_{i'=1}^m x_{i'j}},
\]
for each $i \in[m], j \in[n]$.

Multiplying each side of the inequality  $ L  \le \sum_{i'=1}^m \sum_{j'=1}^n  x_{i'j'} \le U $ by $r_{ij}$ and get $x_{ij}$ instead of $\sum_{i'=1}^m \sum_{j'=1}^n  x_{i'j'} r_{ij}$ yield $L r_{ij} \le x_{ij} \le U r_{ij}$ for each $i \in[m], j \in[n]$.

Multiplying each side of the inequality  $ l_i  \le \sum_{j'=1}^n  x_{ij'} \le u_i $ by $\sum_{i'=1}^m r_{i'j}$ and get $x_{ij}$ instead of $\sum_{j'=1}^n  x_{ij'}\sum_{i'=1}^m  r_{i'j}$ yield $l_i \sum_{i'=1}^m  r_{i'j} \le   x_{ij} \le u_i \sum_{i'=1}^m  r_{i'j} $ for each $i \in[m]$.

Multiplying each side of the inequality  $ l_j'  \le \sum_{j'=1}^n  x_{i'j} \le u_j' $ by $\sum_{j'=1}^n r_{ij'}$ and get $ x_{ij}$ instead of $\sum_{i'=1}^m  x_{i'j}\sum_{j'=1}^n  r_{ij'}$ yield $l_j' \sum_{i'=1}^m  r_{ij'} \le   x_{ij} \le u_j' \sum_{i'=1}^m  r_{ij'} $ for each $j \in[n]$.

Hence, we conclude that   $X \in \tilde \cT^2 (l,u,l',u',L,U)$.
\end{proof}

We conclude that both the sets $\cT^1 (l,u,l',u',L,U)$ and $\cT^2 (l,u,l',u',L,U) $ are outer approximations for $\conv(\cT (l,u,l',u',L,U))$, but $\cT^2 (l,u,l',u',L,U) $ yields a stronger relaxation than $\cT^1 (l,u,l',u',L,U) $. On the other hand, the extended formulation $\cT^1 (l,u,l',u',L,U)$ requires $m+n$ many additional variables while  we need  $mn$ many extra variables for $\cT^2 (l,u,l',u',L,U)$.

\subsection{Valid Inequalities Obtained by RLT}\label{sec:validIneq}

In this section, we strengthen  the polyhedral outer-approximation of $\conv(\tilde \cT(l,u,l',u',L,U) ) $ obtained in the previous section by using RLT. 
Assume that $L > 0$ and let us define the following set of inequalities
\begin{subequations}\label{eq:ext form t}
\begin{align}
        l'_j/ U \le \ &t_j \le  u'_j / L &    &  j \in[n]                      \label{eq:extt1} \\				
	l_i / U\le \ &t'_i \le  u_i / L  &  & i \in[m]                       \label{eq:extt2} \\					
	l_i t_j &\le u'_j t'_i           &    & i \in[m], j \in[n]          \label{eq:extt3} \\				
	l'_j t'_i &\le u_i t_j           &    & i \in[m], j \in[n]          \label{eq:extt4} ,
\end{align}			
\end{subequations}
and consider the set 
\begin{equation*}
\begin{split}
\tilde \cR (l,u,l',u',L,U)  := \bigg \{ X \in \mathbb{R}_{+}^{m \times n}: \exists t \in  \mathbb{R}_{+}^{n}, t' \in  \mathbb{R}_{+}^{m}, & R \in \mathbb{R}_{+}^{m \times n}: \\
& \eqref{eq:ext form t}, \\
& \sum_{j=1}^n t_{j} = 1, \\
& \sum_{i=1}^m t_{i}' = 1, \\
& \ r_{ij} = t_{i}'t_{j}, \ i \in[m] , j \in[n]  , \\
& \ x_{ij} = r_{ij} \sum_{i'=1}^m \sum_{j'=1}^n x_{i'j'} , \ i \in[m] , j \in[n]     \bigg \}.
\end{split}
\end{equation*}

\begin{proposition} \label{prop: RLT set}
We have $  \tilde \cR (l,u,l',u',L,U)  \supseteq \tilde \cT(l,u,l',u',L,U)  $.

\end{proposition}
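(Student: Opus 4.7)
The plan is to exhibit, for an arbitrary $X \in \tilde\cT(l,u,l',u',L,U)$, explicit values of the auxiliary variables $t,t',R$ that certify $X \in \tilde\cR(l,u,l',u',L,U)$. Since $L>0$ is assumed in this subsection, every feasible $X$ has total sum $T := \sum_{i,j} x_{ij} \ge L > 0$, so $X \neq 0$ and $\rank(X)=1$; hence $X = yz^\top$ for nonnegative nonzero vectors $y,z$, which in particular means each row sum and each column sum of $X$ is strictly positive whenever the corresponding $y_i$ or $z_j$ is nonzero. This lets me avoid the degenerate case and define the natural ratios without division-by-zero issues.

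First I would set
\[
r_{ij} := \frac{x_{ij}}{T}, \qquad t_j := \sum_{i=1}^m r_{ij}, \qquad t'_i := \sum_{j=1}^n r_{ij}.
\]
By construction $\sum_j t_j = \sum_i t'_i = 1$, and $x_{ij}=r_{ij}T$ holds trivially. The rank-one factorization $X=yz^\top$ together with these definitions yields $r_{ij}=t'_i t_j$, exactly as in the proof that $\tilde\cT = \tilde\cT^2$. Thus the defining equalities of $\tilde\cR$ are all immediate.

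The substantive step is to verify the RLT inequalities \eqref{eq:extt1}--\eqref{eq:extt4}. For \eqref{eq:extt1}--\eqref{eq:extt2}, I combine the row/column bounds $l'_j \le \sum_i x_{ij} \le u'_j$ and $l_i \le \sum_j x_{ij} \le u_i$ with the total-sum bounds $L \le T \le U$: dividing by $T$ (and using $T \ge L > 0$) gives $l'_j/U \le t_j \le u'_j/L$, and similarly for $t'_i$. For \eqref{eq:extt3}--\eqref{eq:extt4}, I multiply pairs of the original bounds: from $l_i \le \sum_{j'} x_{ij'}$ and $\sum_{i'} x_{i'j} \le u'_j$, which are all nonnegative, I get $l_i\bigl(\sum_{i'} x_{i'j}\bigr) \le u'_j\bigl(\sum_{j'} x_{ij'}\bigr)$, and dividing by $T$ produces $l_i t_j \le u'_j t'_i$. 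The symmetric multiplication of $l'_j \le \sum_{i'} x_{i'j}$ and $\sum_{j'} x_{ij'} \le u_i$ yields \eqref{eq:extt4}.

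The argument is essentially a bookkeeping exercise once the correct definitions of $t,t',R$ are in place; the only conceptual obstacle is recognizing that \eqref{eq:extt3}--\eqref{eq:extt4} are exactly the RLT products of compatible row and column inequalities, which is why they are automatically valid for any rank-one nonnegative matrix satisfying the original bounds. No finer structural fact than the factorization $X=yz^\top$ and nonnegativity of all quantities involved is needed.
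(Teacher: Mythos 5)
Your proof is correct and follows essentially the same route as the paper: the same definitions $r_{ij}=x_{ij}/\sum_{i',j'}x_{i'j'}$, $t_j=\sum_i r_{ij}$, $t'_i=\sum_j r_{ij}$, and the same division-by-the-overall-sum argument for \eqref{eq:extt1}--\eqref{eq:extt2}; your derivation of \eqref{eq:extt3}--\eqref{eq:extt4} by directly multiplying the nonnegative row and column bounds is an equivalent (and slightly cleaner, division-free) version of the paper's sandwich $l_i/t'_i\le\sum x\le u'_j/t_j$. Your observation that $L>0$ makes the $\rank(X)=0$ case vacuous is a legitimate simplification of the paper's explicit treatment of that case.
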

\begin{proof}
Let $X \in \tilde\cT_{m,n}$. {If $\rank(X)=0$, meaning that $X=0$, then we can simply set $t_1=1$ and $t'_1=1$, and thus $r_{11}=1$. In this case, all the lower bounds have to be zero (otherwise, $X=0$ would not have been feasible), and it is trivial to see that $ X  \in \tilde \cR (l,u,l',u',L,U)$. On the other hand, if $\rank(X)=1$, then we set} $r_{ij} := \frac{x_{ij}}{\sum_{i'=1}^m \sum_{j'=1}^n x_{i'j'}}$ for $i \in [m]$ and for $j \in [n]$ ,  $t_i':= \sum_{j'=1}^n r_{ij'}$ for $i \in [m]$ and $t_j:= \sum_{i'=1}^m r_{i'j}$ for $j \in [n]$. We trivially obtain  $\sum_{j=1}^n t_j = 1$, {$ \sum_{i=1}^m t_i' = 1$},  $r_{ij} = t_i't_j$ and $ x_{ij} = r_{ij} \sum_{i'=1}^m \sum_{j'=1}^n x_{i'j'}$ for $ i \in[m]$, $ j \in[n]$.  

Dividing each side of the inequality $l_i \le \sum_{j=1}^n x_{ij} \le u_i$ by $\sum_{i'=1}^m \sum_{j'=1}^n x_{i'j'}$ and using the definition of $t_i'$ as above yield the inequality
\[
\frac{l_i}{\sum_{i'=1}^m \sum_{j'=1}^n x_{i'j'}} \le t_i' \le \frac{u_i}{\sum_{i'=1}^m \sum_{j'=1}^n x_{i'j'}} \implies 
{l_i}/{U} \le t_i' \le {u_i}/{L} , \quad i \in [m]. 
\]
Dividing each side of the inequality $l_j' \le \sum_{j=1}^n x_{ij} \le u_j'$ by $\sum_{i'=1}^m \sum_{j'=1}^n x_{i'j'}$ and using the definition of $t_j$ as above yield the inequality
\[
\frac{l_j'}{\sum_{i'=1}^m \sum_{j'=1}^n x_{i'j'}} \le t_j \le \frac{u_j'}{\sum_{i'=1}^m \sum_{j'=1}^n x_{i'j'}} \implies 
{l_j'}/{U} \le t_j \le {u_j'}/{L} , \quad j \in [n].
\]
The above derivation also shows that we have
\[
\frac{l_i}{t_i'} \le \sum_{i'=1}^m \sum_{j'=1}^n x_{i'j'} \le \frac{u_i}{t_i'}, \ i \in [m] \quad \text{and} \quad 
\frac{l_j'}{t_j} \le \sum_{i'=1}^m \sum_{j'=1}^n x_{i'j'} \le \frac{u_j'}{t_j}, \ j \in [n] ,
\] 
from which we deduce that $l_i t_j \le u_j' t_i'   $ and  $l_j' t_i' \le u_i t_j   $ for $ i \in[m]$, $ j \in[n]$. 

Hence, we prove that  $X \in \tilde\cR (l,u,l',u',L,U) $.
\end{proof}

We will obtain valid inequalities for the nonconvex set $\tilde \cR (l,u,l',u',L,U) $ using the RLT approach. Note that any such valid inequality is also valid for the set of our interest, $\tilde \cT (l,u,l',u',L,U)  $ due to Proposition  \ref{prop: RLT set}. We will apply the following procedure to obtain such inequalities:
\begin{enumerate}
\item Transform the inequalities \eqref{eq:ext form t} into the form  less-than-or-equal type with 0 right hand side.
\item Multiply the resulting inequalities to obtain bilinear expressions in $t$ and $t'$, and convert them into inequalities in $r$:
\begin{enumerate}
\item Replace the term $t_i' t_j$ with $r_{ij}$.
\item Replace the term $t_i'$ with $\sum_{j'=1}^n r_{ij'}$.
\item Replace the term $t_j$ with $\sum_{i'=1}^m r_{i'j}$.
\end{enumerate}
\item Obtain inequalities in $x$ variables by multiplying $\sum_{i'=1}^m \sum_{j=1}^n x_{i'j'}$ with the inequalities obtain in the previous step in $r$ variables:
\begin{enumerate}
    \item Replace the term $r_{ij} \sum_{i'=1}^m \sum_{j'=1}^n x_{i'j'}$ with $x_{ij}$.
    \item Replace the term $\sum_{i'=1}^m r_{i'j} \sum_{i'=1}^m \sum_{j'=1}^n x_{i'j'}$ with $\sum_{i'=1}^m x_{i'j}$.
    \item Replace the term $\sum_{j'=1}^n r_{ij'} \sum_{i'=1}^m \sum_{j'=1}^n x_{i'j'}$ with $\sum_{j'=1}^n x_{ij'}$.
\end{enumerate}
\end{enumerate}

\noindent{\bf Multiplying \eqref{eq:extt1} and \eqref{eq:extt2}}: These are  precisely the McCormick envelopes applied to 
$r_{ij} = t_i' t_j$. These linear inequalities in $r$ variables are given as follows:
\begin{equation*}
\begin{split}
r_ {ij} &\ge (l_i/U) \sum_{i'=1}^m r_{i'j} + (l_j'/U) \sum_{j'=1}^n r_{ij'} - (l_i l_j')/(U^2) \\ 
r_ {ij} &\le (l_i/U) \sum_{i'=1}^m r_{i'j} + (u_j'/L) \sum_{j'=1}^n r_{ij'} - (l_i u_j')/(UL) \\ 
r_ {ij} &\le (u_i/L) \sum_{i'=1}^m r_{i'j} + (l_j'/U) \sum_{j'=1}^n r_{ij'} - (u_i l_j')/(UL) \\ 
r_ {ij} &\ge (u_i/L) \sum_{i'=1}^m r_{i'j} + (u_j'/L) \sum_{j'=1}^n r_{ij'} - (u_i u_j')/(L^2)
\end{split}
\end{equation*}
The linear inequalities in $x$ variables are obtained as follows:
\begin{equation*}
\begin{split}
x_{ij} &\ge (l_i/U) \sum_{i'=1}^m x_{i'j} + (l_j'/U) \sum_{j'=1}^n x_{ij'} - (l_i l_j')/(U^2) \sum_{i'=1}^m\sum_{j'=1}^n x_{i'j'}\\ 
x_{ij} &\le (l_i/U) \sum_{i'=1}^m x_{i'j} + (u_j'/L) \sum_{j'=1}^n x_{ij'} - (l_i u_j')/(UL)\sum_{i'=1}^m\sum_{j'=1}^n x_{i'j'} \\ 
x_{ij} &\le (u_i/L) \sum_{i'=1}^m x_{i'j} + (l_j'/U) \sum_{j'=1}^n x_{ij'} - (u_i l_j')/(UL)\sum_{i'=1}^m\sum_{j'=1}^n x_{i'j'} \\ 
x_{ij} &\ge (u_i/L) \sum_{i'=1}^m x_{i'j} + (u_j'/L) \sum_{j'=1}^n x_{ij'} - (u_i u_j')/(L^2)\sum_{i'=1}^m\sum_{j'=1}^n x_{i'j'}
\end{split}
\end{equation*}

\noindent{\bf Multiplying \eqref{eq:extt3} and \eqref{eq:extt4} for the same $(i,j)$ pair}: 
The second-order cone representable inequalities in $r$ variables are given as follows:
\begin{equation*}
\begin{split}
l_i u_i \bigg(\sum_{i'=1}^m r_{i'j} \bigg)^2 + l_j' u_j' \bigg(\sum_{j'=1}^n r_{ij'}\bigg)^2 \le (l_i l_j' + u_i u_j')r_{ij}
\end{split}
\end{equation*}
The inequalities in $x$ variables, which are again second-order cone representable, are obtained as follows:
\begin{equation*}
\begin{split}
l_i u_i \bigg(\sum_{i'=1}^m x_{i'j} \bigg)^2 + l_j' u_j' \bigg(\sum_{j'=1}^n x_{ij'}\bigg)^2 \le (l_i l_j' + u_i u_j')x_{ij}  \sum_{i'=1}^m\sum_{j'=1}^n x_{i'j'}
\end{split}
\end{equation*}

\noindent{\bf Multiplying \eqref{eq:extt1} and \eqref{eq:extt3} for the same $(i,j)$ pair}: We obtain two types of inequalities. 
{The first type is second-order cone representable. These inequalities in $r$ variables are as follows:} 
\begin{equation*}
\begin{split}
l_i \bigg(\sum_{i'=1}^m r_{i'j} \bigg)^2  \le (l_i l_j'/U) \sum_{i'=1}^m r_{i'j} - (l_j' u_j' /U) \sum_{j'=1}^n r_{ij'} + u_j' r_{ij}.
\end{split}
\end{equation*}
{
The inequalities in $x$ variables are obtained as follows:
\begin{equation*}
    l_{i} \bigg(\sum_{i'=1}^m x_{i'j} \bigg)^2  \le \bigg[ (l_{i} l'_{j}/U) \sum_{i'=1}^m x_{i'j} - (l'_{j} u'_{j} /U) \sum_{j'=1}^n x_{ij'} + u'_{j} x_{ij}\bigg] \sum_{i'=1}^m \sum_{j'=1}^n x_{i'j'}
\end{equation*}
}

The second type is a reverse-convex inequality
\begin{equation*}
\begin{split}
u_i   t_j ^2  \ge (u_i l_j'/U) \sum_{i'=1}^m r_{i'j} - (l_j'^2 /U) \sum_{j'=1}^n r_{ij'} + l_j' r_{ij}.
\end{split}
\end{equation*}
We can over-approximate the left-hand side to obtain the following valid linear inequality in $r$ variables:
\begin{equation}\label{eq:validac_r}
u_i \bigg[ (u_j'/L+l_j'/U)  \sum_{i'=1}^m r_{i'j} - (u_j'l_j')/(UL)  \bigg] \ge (u_i l_j'/U) \sum_{i'=1}^m r_{i'j} - (l_j'^2 /U) \sum_{j'=1}^n r_{ij'} + l_j' r_{ij}.
\end{equation}
The corresponding inequalities in $x$ variables are obtained as follows:
\begin{equation}\label{eq:validac_x}
\begin{split}
u_i \bigg[ (u_j'/L+l_j'/U)  \sum_{i'=1}^m x_{i'j} - (u_j'l_j')/(UL)  \sum_{i'=1}^m\sum_{j'=1}^n x_{i'j'}  \bigg] & \ge \\
(u_i l_j'/U) \sum_{i'=1}^m x_{i'j} - (l_j'^2 /U) \sum_{j'=1}^n x_{ij'} + l_j' x_{ij}.
\end{split}
\end{equation}
We note that similar valid inequalities can be obtained if the inequalities  \eqref{eq:extt1} and \eqref{eq:extt4},   \eqref{eq:extt2} and \eqref{eq:extt4}, or   \eqref{eq:extt2} and \eqref{eq:extt4} are multiplied.

\section{The Pooling Problem}\label{sec:formulations}
In this section, we are going to describe the pooling problem formally, and present its the well-known and recently developed exact formulations and different types of relaxations.

\subsection{Problem Definition and Notation}
Let $G=(N,A)$ represent a graph with the node set $N$ and the arc set $A$. Moreover, let $S$, $I$, $T$, and $K$ denote the set of sources (inputs), intermediates (pools), terminals (outputs), and specifications respectively. Then, in the pooling problem, we have $N = S \cup I \cup T$. For the standard pooling problem , we have $A \subseteq (S \times (I \cup T)) \cup (I \cup T)$, while in the general pooling problem, we have $A \subseteq (S \times (I \cup T)) \cup (I \times (I \cup T))$. This definition clearly shows that in the generalized version, we may have flow streams among the pools. In this notation, $S_i$ is the set of source nodes from which there is a path to node $i$ and $T_i$ is the set of terminal nodes to which there is a path from node $i$. The set of nodes to which there is an arc from node $i$ and the set of nodes from which there is an arc to node $i$ are denoted by $N_i^+$ and $N_i^-$ respectively. In this notation, the unit cost of using arc $(i,j)$ is shown by $C_{ij}$ and the specification $k$ of source $s$ by $\lambda_k^s$. We may also have some lower and upper bounds for the desired specification $k$ at terminal $t$ denoted by $[\underline{\mu}_k^t,\bar{\mu}_k^t]$, the capacity of node $i$ denoted by $[L_i,U_i]$, and capacity of arc $(i,j)$ denoted by $[l_{ij},u_{ij}]$. A summary of all the notation, which we have mostly adapted from \cite{dey2020convexifications}, can be found in Table \ref{tab:sbnotation}.

\begin{table}[H]
  \centering
  \caption{Notations of the source-based rank formulation}
  \bigskip
    \scalebox{1}{
    \begin{tabular}{ccl}
    \toprule
    Indices & $s$     & source (or input), $s=1,…,S$ \\
          & $i$     & intermediate (or pool), $i=1,…,I$ \\
          & $t$     & terminal (or output), $t=1,…,T$ \\
          & $k$     & specification, $k=1,…,K$ \\
    Sets  & $S_i$   & the set of source nodes from which there is a path to node $i$\\
          & $T_i$   & the set of terminal nodes to which there is a path from node $i$\\
          & $N_i^+$ & the set of nodes to which there is an arc from node $i$\\
          & $N_i^-$ & the set of nodes from which there is an arc from node $i$\\
    Variables & $f_{ij}$ & the amount of flow from node $i$ to node $j$ \\
          & $x_{ij}^s$ & the amount of flow on arc $(i,j)$ originated at the source $s \in S_i$ \\
          & $q_{i}^{s}$ & the fraction of flow at pool $i$ originated at source $s$ \\
    Parameters & $C_{ij}$  &   cost of sending unit flow over arc $(i,j)$ \\
          & $\lambda_k^s$  & the specification $k$ of source $s$ \\
          & $[\underline{\mu}_k^t,\bar{\mu}_k^t]$  & the desired interval for specification $k$ of terminal $t$ \\
          & $[L_i,U_i]$ & lower bound and upper bound of the capacity of node $i$  \\
          & $[l_{ij},u_{ij}]$ & lower bound and upper bound of the capacity of arc $(i,j)$ \\
    \bottomrule
    \end{tabular}}%
  \label{tab:sbnotation}%
\end{table}%

\subsection{Source-Based Rank Formulation}
In this section, we review the source-based multi-commodity flow formulation for the generalized pooling problem developed in \cite{alfaki2013multi}. This formulation consists of the proportion variables corresponding to sources and the flow variables along with the arcs between pools and terminals. 
This formulation was later investigated and presented in \cite{dey2020convexifications}. 
These authors have convexified the nonconvex constraint in different ways. We will use their formulation and go over their proposed methods in the next sections.

\subsubsection{Mathematical Model}
In this section, we will review the {\it Source-Based} multi-commodity flow formulation. An explanation of the mathematical model and an introduction to different relaxations and restrictions will follow.  
\begin{align}
    \min & \sum_{i \in I}\sum_{s \in S_i}\sum_{j \in N_i^+} C_{si}x_{ij}^s-\sum_{i \in I}\sum_{j \in N_i^+} C_{ij}f_{ij} \label{sb:objective}\\
    \text{ s.t. } & L_i \le \sum_{j \in N_i^-} f_{ji} \le U_i & \forall i \in I \cup T \label{sb:pools&terminalscapacity}\\
    &L_i \le \sum_{j \in N_i^+} f_{ij} \le U_i & \forall i \in S \label{sb:sourcecapacity}\\
    &l_{ij} \le f_{ij} \le u_{ij} & \forall (i,j) \in A \cup \{(s,i): i \in I, s \in S_i\} \label{sb:arccapacity}\\
    &\sum_{j \in N_{si}^-} x_{ji}^s=\sum_{j \in N_i^+} x_{ij}^s & \forall i \in I, \forall s\in S_i \label{sbbalance}\\
    &\sum_{s\in S_i}x_{ij}^s=f_{ij} & \forall (i,j) \in A \label{sb:flowdecomposition}\\
    &\sum_{j \in N_i^+} x_{ij}^s=f_{si} & \forall i \in I, \forall s \in S_i \label{sb:ghostflow}\\
    &\underline{\mu}_k^t \sum_{j \in N_t^-} f_{jt} \le \sum_{j \in N_t^-} \sum_{s \in S_j} \lambda_k^s x_{jt}^s \le \bar{\mu}_k^t\sum_{j \in N_t^-} f_{jt} & \forall t \in T, \forall k \in K \label{sb:specifications}\\
    & x_{ij}^s=q_i^s f_{ij} & \forall (i,j) \in A, \forall s \in S_i \label{sbbilinear}\\
    & x_{ij}^s \ge 0 & \forall (i,j) \in A, \forall s \in S_i \label{sb:x_nonnegativity}\\
    & q_i^s \ge 0 & \forall i \in I, \forall s \in S_i. \label{sb:q_nonnegativity}
\end{align}

The objective function \eqref{sb:objective} minimizes the cost of sending the raw materials to the outputs through some pools. In this equation, the sum of $x_{ij}^s$ variables can be replaced by $f_{ij}$ and the objective will be as follows:
\[\min \sum_{(i,j) \in A} C_{ij}f_{ij}\]
In this case, we should consider the cost of purchasing raw materials as positive and the profit of selling outputs as negative. In addition, the cost of sending the raw material directly to the output is the difference between its cost and the revenue from selling it which could be either positive or negative.

Constraint \eqref{sb:pools&terminalscapacity} imposes bounds on the capacity of pools and terminals while in constraint \eqref{sb:sourcecapacity}, we have these bounds for the sources. In constraint \eqref{sb:arccapacity}, flows on different arcs are limited to be in an interval, and constraint \eqref{sbbalance} is the flow conservation, which guarantees that all the flows coming into pools go out of them. We define the set $N_{si}^-$ used in equation (\ref{sbbalance}) as
\[N_{si}^- = \{ j \in N_i^- : j \notin S\setminus s \text{ and } s \in S_j \}.\]

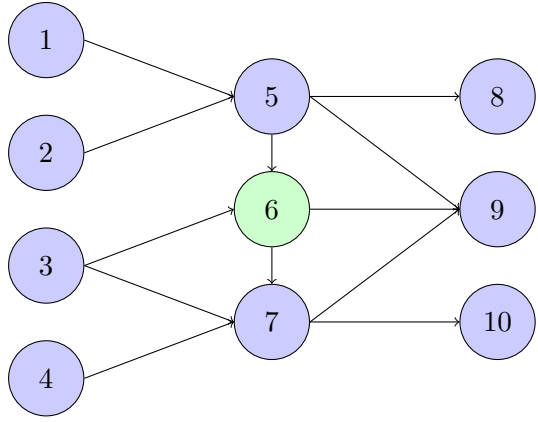
\begin{figure}[H]
    \begin{center}
        \begin{tikzpicture}[node distance=1.5cm, circle/.style={draw, circle, inner sep=0pt, minimum size=12mm}]
        \tikzstyle{input} = [draw, ellipse, minimum height=1cm, minimum width=1cm, fill=blue!20]
        \tikzstyle{output} = [draw, ellipse, minimum height=1cm, minimum width=1cm, fill=blue!20]
        \tikzstyle{pool} = [draw, ellipse, minimum height=1cm, minimum width=1cm, fill=blue!20]
        \node [input] (input1) at (0,0) {$1$};
        \node (input2) [input, below of=input1] {$2$};
        \node (input3) [input, below of=input2] {$3$};
        \node (input4) [input, below of=input3] {$4$};
        
        \node [pool] (pool1) at (3,-0.75) {$5$};
        \node [pool, fill=green!20] (pool2) at (3,-2.25) {$6$};
        \node (pool3) [pool, below of=pool2] {$7$};
    
        \node [output] (output1) at (6,-0.75) {$8$};
        \node [output] (output2) at (6,-2.25) {$9$};
        \node (output3) [output, below of=output2] {$10$};
        
        \draw[->] (input1.east) -- (pool1.west);
        \draw[->] (input2.east) -- (pool1.west);
        \draw[->] (input3.east) -- (pool2.west);
        \draw[->] (input3.east) -- (pool3.west);
        \draw[->] (input4.east) -- (pool3.west);

        \draw[->] (pool1.south) -- (pool2.north);
        \draw[->] (pool2.south) -- (pool3.north);
        
        \draw[->] (pool1.east) -- (output1.west);
        \draw[->] (pool1.east) -- (output2.west);
        \draw[->] (pool2.east) -- (output2.west);
        \draw[->] (pool3.east) -- (output2.west);
        \draw[->] (pool3.east) -- (output3.west);
        
        \end{tikzpicture}
    \end{center}
    \caption{A sample generalized pooling problem instance.}
    \label{fig:SamplePoolingRelaxations}
\end{figure}

Figure~\ref{fig:SamplePoolingRelaxations} shows a fictitious sample of a general pooling problem, which will be our running example in this section. Let us write constraint \eqref{sbbalance} for pool $i=6$ and its source $s=1$: 
\[N_{1,6}^-=\{5\},~N_6^+=\{7,9\} \implies x_{5,6}^1=x_{5,7}^1+x_{5,9}^1\]
In fact, this constraint ensures that the incoming flow to a pool from each of its source nodes equals the outgoing flow from it originated at the same source.

Equations \eqref{sb:flowdecomposition} and \eqref{sb:ghostflow} ensure the flow decomposition to be performed precisely while if the link $(s,i)$ does not exist for $i \in I$ and $s \in S_i$ but we will call $f_{si}$ as a {\it ghost flow} following \cite{dey2020convexifications}. Let us have a look at Figure~\ref{fig:SamplePoolingRelaxations} and try to write equation \eqref{sb:ghostflow} for $i=6$ and $s=1$:
\[x_{6,7}^1+x_{6,9}^1=f_{1,6}\]
Constraint \eqref{sb:specifications} is to meet the specification requirements at terminals. Constraint \eqref{sbbilinear}, which is the nonconvex bilinear constraint, calculates the fraction of flow at pool $i$ originated at source $s$ on each arc $(i,j)$. Finally, we have the nonnegativity of the variables in constraints \eqref{sb:x_nonnegativity} and \eqref{sb:q_nonnegativity}.

It is worth mentioning that the {\it Source-Based} formulation is equivalent to the {\it PQ}-formulation.  

\subsection{Polyhedral Relaxations} \label{sec:polyhedralRelaxSB}

The proposed exact formulation of the pooling problem is nonconvex. Now, we will present LP relaxations of this model. Our starting point will be the work of
\cite{dey2020convexifications} in which the bilinear constraint in the {\it Source-Based} formulation (\ref{sbbilinear}) is rewritten as a set of rank restrictions on a matrix consisting of decomposed flow variables $x_{ij}^s$ as follows:
\begin{equation}
    \text{rank} \left([x_{ij}^s]_{(s,j)\in S_i \times N_i^+}\right)\le 1 \qquad \forall i \in I. \label{sbrank1}
\end{equation}
As an example, consider pool $i=6$ in Figure~\ref{fig:SamplePoolingRelaxations}. It is easy to see that the following relation:
$$\begin{bmatrix}
        x_{6,7}^1 & x_{6,9}^1 \\
        x_{6,7}^2 & x_{6,9}^2 \\
        x_{6,7}^3 & x_{6,9}^3 \\
    \end{bmatrix} = \begin{bmatrix}
        q_6^1 \\
        q_6^2 \\
        q_6^3 \\
    \end{bmatrix} \times \begin{bmatrix}
        f_{6,7} & f_{6,9}\\
    \end{bmatrix}$$    
This example demonstrates the logic of the rank-one constraint as the matrix on the left-hand side is the 
bilinear constraint can be written as the product of a column vector and a row vector. 

The {rank} constraint \eqref{sbrank1} can be convexified in different ways. Below, we  present some LP-based relaxations in detail.

\subsubsection{Column-Wise Relaxation}
Let us consider constraints \eqref{sb:sourcecapacity}, \eqref{sb:arccapacity} (in which $f_{ij}$ is substituted by its equivalent values from \eqref{sb:ghostflow} and \eqref{sb:flowdecomposition} respectively), and the bilinear constraint \eqref{sbbilinear} (replaced with its equivalent {\it rank} constraint \eqref{sbrank1}) as a set. According to Theorem \ref{theorem:onlyRowOrColumn} and equation \eqref{eq:extended xt} we can define the \textit{column-wise} relaxation for the \textit{source-based} formulation for pool $i$ as below:
\begin{equation}\label{sb:colwiseRelax}
    \mathcal{F}_1^{\mathcal{S}(i)}:=\bigg \{ [x_{ij}^s]_{(s,j)\in S_i \times N_i^+} \in \conv(\tilde \cT(l_i,u_i, \cdot,\cdot,L_i,U_i) ) \bigg \}.
\end{equation}
This relaxation restricts the column-sum of the decomposed flow variables' matrices for all $i \in I$ and is equivalent to the McCormick relaxation of the {\it PQ}-formulation \citep{dey2020convexifications}.

As an illustration, let us consider $i=6$ in Figure~\ref{fig:SamplePoolingRelaxations} as a pool for which we will implement the column-wise extended relaxation. The associated sets for this pool are as follows:
\begin{equation}
    S_6=\{1,2,3\}, N^+_6=\{7,9\} \label{sets:sampleSB}
\end{equation}
Then, in the following matrix, we have a row for each element of $S_6$ and a column for each element in $N^+_6$. The bound of each column is the bound of an outgoing arc from the corresponding pool and the overall bound is the pool bound. This instance shows how we impose column-sum bounds on the matrix of decomposed flow variables for each pool.

\NiceMatrixOptions{code-for-first-row = \color{black},
                   code-for-first-col = \color{black},
                   code-for-last-row = \color{black},
                   code-for-last-col = \color{black}}
$$\left[x_{ij}^s\right]_{(s,j)}=\begin{pNiceArray}{cc}[first-row,last-row=4,first-col,last-col,nullify-dots]
    & u_{6,7}   &   u_{6,9}  & U_6\\
    & x_{6,7}^1 & x_{6,9}^1 \\
    & x_{6,7}^2 & x_{6,9}^2 \\
    & x_{6,7}^3 & x_{6,9}^3 \\
    L_6 & l_{6,7}    & l_{6,9}    
\end{pNiceArray}$$

\subsubsection{Row-Wise Relaxation}
Analogous to the \textit{column-wise} relaxation, the \textit{row-wise} relaxation can be defined based on Theorem \ref{theorem:onlyRowOrColumn} and equation \eqref{eq:extended xt'}. This relaxation, which restricts the row-sum of the decomposed flow variables' matrices for all $i \in I$, is defined as follows:

\begin{equation}\label{sb:rowwiseRelax}
    \mathcal{F}_2^{\mathcal{S}(i)}:=\bigg \{ [x_{ij}^s]_{(s,j)\in S_i \times N_i^+} \in \conv(\tilde \cT(\cdot,\cdot,l'_i,u'_i,L_i,U_i) ) \bigg \}.
\end{equation}

Considering pool 6 from Figure~\ref{fig:SamplePoolingRelaxations} and the associated sets defined in equations \eqref{sets:sampleSB}, the following matrix has a row for each element in $S_6$ and a column for each element of $N_6^+$. The bounds imposed on the summation of each row are the bounds of incoming arcs (including the {\it ghost flows}) to pool $i=6$ and the overall bound is the pool's capacity bounds. 

\NiceMatrixOptions{code-for-first-row = \color{black},
                   code-for-first-col = \color{black},
                   code-for-last-row = \color{black},
                   code-for-last-col = \color{black}}
$$\left[x_{ij}^s\right]_{(s,j)}=\begin{pNiceArray}{cc}[first-row,last-row=4,first-col,last-col,nullify-dots]
    &    &     & U_6\\
l_{1,6}    & x_{6,7}^1 & x_{6,9}^1 & u_{1,6}\\
l_{2,6}    & x_{6,7}^2 & x_{6,9}^2 & u_{2,6}\\
l_{3,6}    & x_{6,7}^3 & x_{6,9}^3 & u_{3,6}\\
    L_6 &     &     
\end{pNiceArray}$$
This matrix shows how we impose row bounds on the matrix of the decomposed flow variables.

\subsubsection{Intersection of Row-Wise and Column-Wise Relaxations}
We can use the intersection of the row-wise and column-wise relaxations as a new method to relax the nonlinear constraint of the pooling problem (equation \eqref{eq:intersectionRowCol}). As we saw in Section \ref{sec:intersectionRowCol}, this relaxation is at least as good as both of the previous ones but increases the scale of the problem. We use the extended formulations of the row-wise and column-wise relaxations to implement it and define it for all $i \in I$ as follows: 
\[\mathcal{F}_3^{\mathcal{S}(i)
}=\mathcal{F}_1^{\mathcal{S}(i)
} \cap \mathcal{F}_2^{\mathcal{S}(i)}\]

\subsection{Mixed-Integer Programming Approximations}
One of the other ways to deal with the bilinear constraint (\ref{sbbilinear}), is to utilize discretization methods. \cite{gupte2017relaxations} have classified the discretization methods proposed for the pooling problem into two different categories: i) forcing some variables to take certain prespecified values from their domain which applies to each bilinear program, and ii) discretizing the amount of flow at each pool which was proposed by \cite{dey2015analysis} for the first time and results in a ``network flow MILP restriction'' by exploiting the pooling problem's structure. Both of these strategies convert the pooling problem to an MILP and give an approximation of the problem. 
In this section, we will use the discretization methods described in \cite{dey2020convexifications}, which focus on the first strategy. We use them in the {\it Source-Based} formulation to obtain inner and outer approximations of the pooling problem.

In this section, we try to find an outer approximation (relaxation) by discretizing the proportion variables $q$ as follows:
\[q_j=\sum_{h=1}^H 2^{-h}z_{jh}+\gamma_j,\]
where $H \in \mathbb{Z}_{++}$ is the level of discretization, $z_{ih}$ are binary variables, and $\gamma_i$ is a continuous non-negative variable upper-bounded by $2^{-H}$. Now we define $x_{ij}^s$ as follows:
\[x_{ij}^s = \left(\sum_{j' \in N_i^+} x_{ij'}^s\right)\left(\sum_{h=1}^H 2^{-h}z_{jh}+\gamma_j \right)\qquad \forall i \in I, \forall s \in S_i, \forall j \in N_i^+.\]
Let $\alpha_{sjh}:=(\sum_{j' \in N_i^+} x_{ij'}^s)z_{jh}$ and $\beta_{ij}:=(\sum_{j' \in N_i^+}x_{ij'}^s)\gamma_j$, then, by using the McCormick envelopes, we obtain the following outer-approximation for all $i \in I,~ s \in S_i,~ \text{and}~ j \in N_i^+$:
\begin{equation*}
    \begin{split}
        \bar{\mathcal{D}}_{(|S_i|,|N_i^+|,H)}^{row} & ([l_{si}]_{s},[u_{si}]_{s}):= \\
        & \{x \in \mathbb{R}_+^{S_i \times N_i^+}| (\alpha,\beta, \gamma, z) \in \mathbb{R}^{S_i \times N_i^+ \times H} \times \mathbb{R}^{S_i \times N_i^+} \times \mathbb{R}^{N_i^+} \times \{0,1\}^{N_i^+ \times H}:
    \end{split}
\end{equation*}

\begin{align}
    & l_{si} z_{jh}\le \alpha _{sjh} \le u_{si} z_{jh} & \forall j \in N_i^+, \forall h \in [H],\label{sb:dis_relax_1}\\
    & u_{si} z_{jh}+\sum_{j' \in N_i^+} x_{ij'}^s - u_{si} \le l_{si} z_{jh} + \sum_{j' \in N_i^+} x_{ij'}^s - l_{si} & \forall j \in N_i^+, \forall h \in [H],\label{sb:dis_relax_2}\\
    & l_{si} \gamma_j \le \beta_{sj} \le u_{si} \gamma_j & \forall j \in N_i^+,\label{sb:dis_relax_3}\\
    \begin{split}
        & u_{si} \gamma_j + 2^{-H}(\sum_{j' \in N_i^+} x_{ij'}^s - u_{si}) \le \beta_{sj}\\
        & \beta_{sj} \le l_{si} \gamma_j + 2^{-H}(\sum_{j' \in N_i^+} x_{ij'}^s - l_{si})
    \end{split} &\forall j \in N_i^+,\label{sb:dis_relax_4}\\
    & l_{si} \le \sum_{j \in N_i^+} x_{ij}^s \le u_{si} \label{sb:dis_relax_5}\\
    & x_{ij}^s=\sum_{h=1}^H 2^{-h} \alpha_{sjh}+\beta_{sj} & \forall j \in N_i^+ \}.\label{sb:dis_relax_6}
\end{align}
\cite{dey2020convexifications} showed that $\bar{\mathcal{D}}_{(|S_i|,|N_i^+|,H)}^{row}$ is a relaxation of the source-based rank formulation. 
 
We can analogously define the outer approximation denoted as $\bar{\mathcal{D}}_{(|S_i|,|N_I^+|,H)}^{col}([l_{ij}]_j,[u_{ij}]_j), \forall i \in I$ by restricting the sum of each column in the decomposed flow variable matrices and as well. 


Analogous to the {\it Source-Based} formulation, we have the {\it Terminal-Based} formulation consisting of the proportion variables corresponding to the terminals and the flow variables along with the arcs between sources and pools. This model was first introduced in \cite{alfaki2013multi} as the {\it TP}-formulation and was later utilized in \cite{dey2020convexifications}. 


\section{Solution Approach}\label{sec:solution}
In this section, we develop a new LP relaxation that considers imposing bounds on the row-sum and the column-sum of the decomposed flow variable matrices simultaneously (Section \ref{sec:newLPRelax}). We also provided the technical details of obtaining new valid inequalities by the RLT in Section \ref{sec:validIneq}, which we will use in the computations. 
In addition, we discuss how to utilize the OBBT technique to improve the bounds of the arcs and nodes of the generalized pooling problem instances of the literature (Section \ref{sec:OBBT}). Moreover, we propose a simple and computationally cheap bound tightening method to improve the bounds of the mining problem as a special case of the generalized pooling problem with the ``time-indexed'' feature (Section \ref{sec:miningTopoolingInstruction}). {It is important to note that this section represents a novel application of the concepts and techniques introduced in Section \ref{sec:socp} to the pooling problem, showcasing the versatility and effectiveness of our approach.}

\subsection{New Relaxations and Valid Inequalities}
We have discussed the well-known relaxations of the literature in Section \ref{sec:formulations}. In this section, we will present the relaxations and valid inequalities we have developed to improve the quality of the dual bounds.
\subsubsection{New Linear Programming Relaxations}\label{sec:newLPRelax}
We have reviewed the row-wise and column-wise extended relaxations for the {\it Source-Based} and {\it Terminal-Based} multi-commodity flow formulations presented by \cite{dey2020convexifications} in Section \ref{sec:polyhedralRelaxSB}. Also, we discussed that a stronger relaxation can be obtained by intersecting these two aforementioned relaxations, which may increase the size of the problem. In Section \ref{sec:strongerPolyRelax}, we showed that to have an even stronger relaxation, we can consider imposing bounds on the row-sum and column-sum of a matrix consisting of the decomposed flow variables of each pool simultaneously. We call this relaxation {\it Row-Column} and define it as the following.

\begin{equation}\label{sb:rowcolumnRelax}
    \mathcal{F}_4^{\mathcal{S}(i)}:=\bigg \{ [x_{ij}^s]_{(s,j)\in S_i \times N_i^+} \in \cT^2(l_i,u_i,l'_i,u'_i,L_i,U_i) \bigg \}.
\end{equation}

This relaxation convexifies the set $\tilde \cT^2(l_i,u_i,l'_i,u'_i,L_i,U_i)$ defined in the equation \eqref{eq:extended xr + rank} which can be considered as the intersection of the bilinear constraint (i.e., its equivalent {\it rank} constraint) and the capacity constraints that we have in the {\it Source-Based} formulation for each pool $i \in I$.

Proposition \ref{prop: T T1} shows the \textit{row-column} relaxation is at least as good as the intersection of the row-wise and the column-wise relaxations. Therefore, we have the following in which the left-hand side relationship can be strict:
\[\mathcal{F}_4^{\mathcal{S}(i)} \subseteq \mathcal{F}_3^{\mathcal{S}(i)}:= \mathcal{F}_1^{\mathcal{S}(i)} \cap \mathcal{F}_2^{\mathcal{S}(i)}\]

{
\subsection{Valid Inequalities}
In Section \ref{sec:validIneq}, we applied the RLT to derive new valid inequalities to strengthen the relaxations. Now, we will exemplify how these inequalities  are adaptable to the context and the notations of the pooling problem. For brevity, we will use equations \eqref{eq:validac_r} and \eqref{eq:validac_x} below as examples although all the inequalities derived in Section \ref{sec:validIneq} are applicable in principle.
\begin{itemize}
    \item The inequalities \eqref{eq:validac_r} in $r$ variables can be written as follows:
    \begin{equation*}
    \begin{split}
          u_{si} \bigg[ (u_{ij}/L_i+l_{ij}/U_i)  \sum_{s' \in S_i} r_{ij}^{s'} - (u_{ij}l_{ij})/(U_iL_i)  \bigg] \ge (u_{si} l_{ij}/U_i) \sum_{s' \in S_i} r_{ij}^{s'} \\
          - (l_{ij}^2 /U_i) \sum_{j' \in N_i^+} r_{ij'}^s + l_{ij} r_{ij}^s\\
    \end{split}
    \end{equation*}
    \item The corresponding inequalities \eqref{eq:validac_x} in $x$ variables are as follows:
    \begin{equation*}
    \begin{split}
    u_{si} \bigg[ (u_{ij}/L_i+l_{ij}/U_i)  \sum_{s' \in S_i} x_{ij}^{s'} - (u_{ij}l_{ij})/(U_iL_i)  \sum_{s' \in S_i}\sum_{j' \in N_i^+} x_{ij'}^{s'}  \bigg] & \ge \\ 
    (u_{si} l_{ij}/U_i) \sum_{s' \in S_i} x_{ij}^{s'} - (l_{ij}^2 /U_i) \sum_{j' \in N_i^+} x_{ij'}^s + l_{ij} x_{ij}^s
    \end{split}
    \end{equation*}
\end{itemize}
Our experiments involve simultaneously adding  valid inequalities  both in  $x$ and $r$ variables. Specifically, we denote the valid inequalities resulting from the multiplication of \eqref{eq:extt1} and \eqref{eq:extt2} as $\mathcal{V}_{ab}$, while the ones obtained by multiplying \eqref{eq:extt1} and \eqref{eq:extt3} are labeled as $\mathcal{V}_{ac}$. We have restricted ourselves to these families of inequalities since the other inequalities either have a negligible effect on the overall results or cause numerical issues. Detailed results of our experiments with the addition of valid inequalities can be found in Section \ref{sec:comp-valid}.
}

\subsection{Bound Tightening}

\subsubsection{Optimization-Based Bound Tightening}\label{sec:OBBT}
In global optimization, one of the valuable tools to reduce the variables' domain is to execute OBBT \citep{coffrin2015strengthening,puranik2017bounds,bynum2018tightening}. Let \underbar{$z$} and $\bar{z}$ represent the lower and upper bound of our multi-commodity flow problems, which can be obtained from a relaxation and any primal solution, respectively. Then to find the bounds of arcs and different nodes, we optimize the following objective functions over a linear programming relaxation of the original problem. We need to take into account that the original objective function should be between \underbar{$z$} and $\bar{z}$. Therefore, we add this as a constraint to the new problem. 
\begin{itemize}
    \item To find the lower bound (upper bound) of each arc $(i,j) \in A$, we minimize (maximize) the corresponding flow variable $(f_{ij})$.
    \item To generate a lower bound (upper bound) for each source node $s$, we minimize (maximize) the summation of outgoing flows from that node $(\sum_{i \in N^+_s} f_{si})$.
    \item For each pool $i$, to find a lower bound (upper bound), we minimize (maximize) the summation of incoming flows $(\sum_{j \in N^-_i} f_{ji})$ or the summation of outgoing flows from that node $(\sum_{j \in N^+_i} f_{ij})$.
    \item Finally, to improve the lower bound (upper bound) of each terminal node $t$, we minimize (maximize) the summation of all the incoming flows to that terminal $(\sum_{i \in N^-_t} f_{it})$.
\end{itemize}

\subsubsection{Bound Tightening for the Time-indexed Pooling Problem} \label{sec:miningTopoolingInstruction}
A special case of the generalized pooling problem which arises in the mining industry is investigated in \cite{boland2015special}. In this case, the raw material (supply) with certain specifications comes into stockpile $p=\{1, \dots, P\}$ at time $\tau \in \mathcal{T}_p^s$. On the other hand, demand for the final product with the desired specifications is placed at time $\tau \in \mathcal{T}^t$. Any violations of the output specifications from the customer's desired ones will cause a ``contractually agreed'' penalty, and the objective function is to minimize this penalty. The prescription of converting this problem to a general pooling problem by \cite{boland2015special} is as follows.

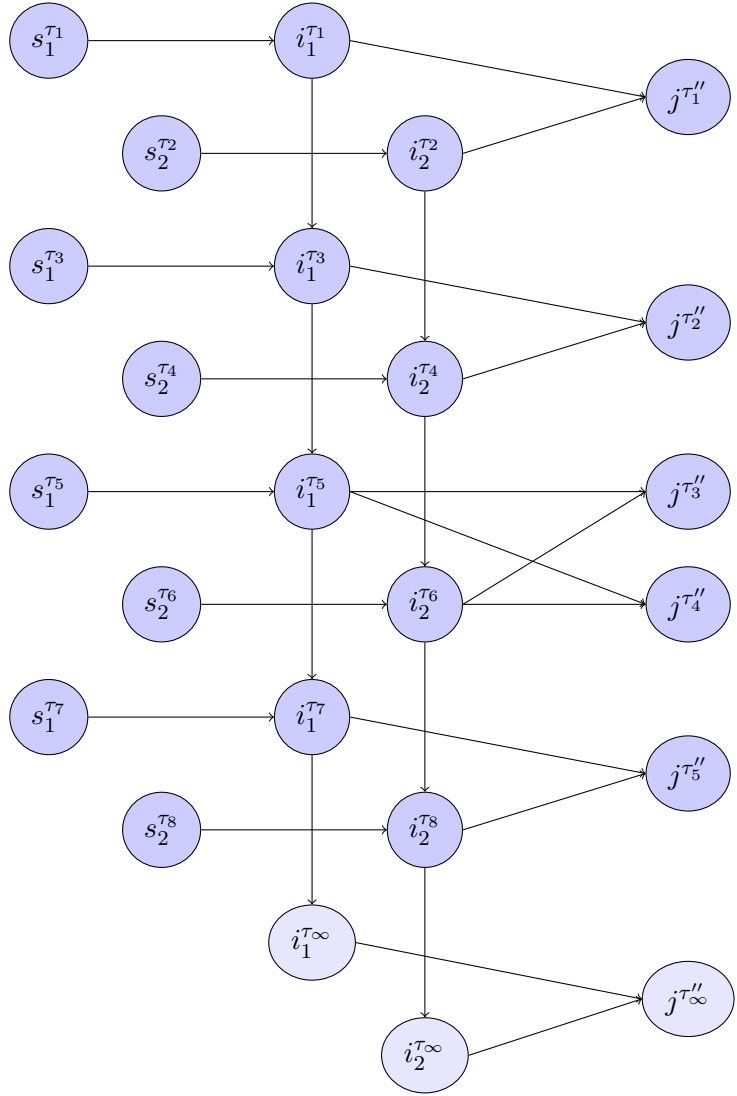
\begin{figure}
    \begin{center}
        \begin{tikzpicture}[node distance=1.5cm, circle/.style={draw, circle, inner sep=0pt, minimum size=12mm}]
        \tikzstyle{input} = [draw, ellipse, minimum height=1cm, minimum width=1cm, fill=blue!20]
        \tikzstyle{output} = [draw, ellipse, minimum height=1cm, minimum width=1cm, fill=blue!20]
        \tikzstyle{pool} = [draw, ellipse, minimum height=1cm, minimum width=1cm, fill=blue!20]
        \node [input] (input1) at (0,0) {$s_1^{\tau_1}$};
        \node [input] (input2) at (1.5,-1.5) {$s_2^{\tau_2}$};
        \node [input] (input3) at (0,-3) {$s_1^{\tau_3}$};
        \node [input] (input4) at (1.5,-4.5) {$s_2^{\tau_4}$};
        \node [input] (input5) at (0,-6) {$s_1^{\tau_5}$};
        \node [input] (input6) at (1.5,-7.5) {$s_2^{\tau_6}$};
        \node [input] (input7) at (0,-9) {$s_1^{\tau_7}$};
        \node [input] (input8) at (1.5,-10.5) {$s_2^{\tau_8}$};
        
        \node [pool] (pool1) at (3.5,0) {$i_1^{\tau_1}$};
        \node [pool] (pool2) at (5,-1.5) {$i_2^{\tau_2}$};
        \node [pool] (pool3) at (3.5,-3) {$i_1^{\tau_3}$};
        \node [pool] (pool4) at (5,-4.5) {$i_2^{\tau_4}$};
        \node [pool] (pool5) at (3.5,-6) {$i_1^{\tau_5}$};
        \node [pool] (pool6) at (5,-7.5) {$i_2^{\tau_6}$};
        \node [pool] (pool7) at (3.5,-9) {$i_1^{\tau_7}$};
        \node [pool] (pool8) at (5,-10.5) {$i_2^{\tau_8}$};
        \node [pool,fill=blue!10] (pool9) at (3.5,-12) {$i_1^{\tau_\infty}$};
        \node [pool,fill=blue!10] (pool10) at (5,-13.5) {$i_2^{\tau_\infty}$};
    
        \node [output] (output1) at (8.5,-0.75) {$j^{\tau''_1}$};
        \node [output] (output2) at (8.5,-3.75) {$j^{\tau''_2}$};
        \node [output] (output3) at (8.5,-6) {$j^{\tau''_3}$};
        \node [output] (output4) at (8.5,-7.5) {$j^{\tau''_4}$};
        \node [output] (output5) at (8.5,-9.75) {$j^{\tau''_5}$};
        \node [output,fill=blue!10] (output6) at (8.5,-12.75) {$j^{\tau''_\infty}$};
        
        \draw[->] (input1.east) -- (pool1.west);
        \draw[->] (input2.east) -- (pool2.west);
        \draw[->] (input3.east) -- (pool3.west);
        \draw[->] (input4.east) -- (pool4.west);
        \draw[->] (input5.east) -- (pool5.west);
        \draw[->] (input6.east) -- (pool6.west);
        \draw[->] (input7.east) -- (pool7.west);
        \draw[->] (input8.east) -- (pool8.west);

        \draw[->] (pool1.south) -- (pool3.north);
        \draw[->] (pool2.south) -- (pool4.north);
        \draw[->] (pool3.south) -- (pool5.north);
        \draw[->] (pool4.south) -- (pool6.north);
        \draw[->] (pool5.south) -- (pool7.north);
        \draw[->] (pool6.south) -- (pool8.north);
        \draw[->] (pool7.south) -- (pool9.north);
        \draw[->] (pool8.south) -- (pool10.north);
        
        \draw[->] (pool1.east) -- (output1.west);
        \draw[->] (pool2.east) -- (output1.west);
        \draw[->] (pool3.east) -- (output2.west);
        \draw[->] (pool4.east) -- (output2.west);
        \draw[->] (pool5.east) -- (output3.west);
        \draw[->] (pool5.east) -- (output4.west);
        \draw[->] (pool6.east) -- (output3.west);
        \draw[->] (pool6.east) -- (output4.west);
        \draw[->] (pool7.east) -- (output5.west);
        \draw[->] (pool8.east) -- (output5.west);
        \draw[->] (pool9.east) -- (output6.west);
        \draw[->] (pool10.east) -- (output6.west);
        
        \end{tikzpicture}
    \end{center}
    \caption{A Mining Problem Instance}
    \label{fig:miningNoBounds}
\end{figure}

\begin{itemize}
    \item {\bf Input Nodes}: Create the input node $s_p^\tau$ for each supply coming into stockpile $p$ at time $\tau \in \mathcal{T}_p^s$.
    \item {\bf Pool Nodes}: Create the pool node $i_p^\tau$ for each supply coming into stockpile $p$ at time $\tau \in \mathcal{T}_p^s$.
    \item {\bf Output Nodes}: Create the output node $j^\tau$ for each demand at time $\tau \in \mathcal{T}^t$.
    \item {\bf Input-to-Pool Arcs}: Create an arc from input node $s_p^\tau$ to pool node $i_p^\tau$ for each supply coming into stockpile $p$ at time $\tau \in \mathcal{T}_p^s$.
    \item {\bf Pool-to-Pool Arcs}: Create an arc from $i_p^\tau$ to $i_p^{\tau'}$ where $\tau' \in \mathcal{T}_p^s$ is the time of the ``immediate successor'' supply of the one at time $\tau \in \mathcal{T}_p^s$ coming to the stockpile $p$.
    \item {\bf Pool-to-Output Arcs}: Create an arc from $i_p^\tau$ to $j^{\tau''}$ where $\tau'' \in \mathcal{T}^t$ is the time of the ``immediate successor'' demand of the supply $s_p^\tau$ coming to stockpile $p$ at time $\tau \in \mathcal{T}_p^s$.
\end{itemize}

We also add one extra pool with time $\tau=\infty$ for the supply surplus of each stockpile whose summation is all being directed to an extra output node that we add. The amount of incoming flow to the last output equals the summation of all the supplies minus the summation of all the demands. Figure~\ref{fig:miningNoBounds} shows a mining problem instance.

Since the mining problem is mostly large-scale and has some unique features, such as being time-indexed, which increases the size of the problem even more, it may not be a good idea to perform Optimization-Based Bound Tightening on it. Therefore, we propose some simple and cheap methods to improve the bounds of this problem.
Let us consider Figure~\ref{fig:miningNoBounds} as a part of a mining problem instance which is formulated as a general pooling problem in which the nodes are placed vertically to reflect the time of supply/demand (the supply/demand node's names represent the ordering of their time). In addition, there are two stockpiles in this example, and we can see the input and pool nodes are aligned in two lines to show which nodes are from the same stockpile (even supply and pool nodes are from stockpile $1$ and odds are from stockpile $2$). Algorithm \ref{alg:miningBoundTightening} shows how we can improve the bounds of different nodes and arcs of this instance.\\

\begin{algorithm}[H]
\caption{Bound Tightening for the Mining Problem}
\label{alg:miningBoundTightening}
\begin{algorithmic}[1]
\STATE Bounds of each input node and its outgoing arc equals its amount of supply;
$$L_s=U_s=l_{si}=u_{si}=q_s \qquad \forall s \in S, i \in N_s^+.$$
\STATE Bounds of each output node equals its amount of demand;
$$L_t=U_t=d_t \qquad \forall t \in T.$$
\STATE The lower bound of each pool-to-terminal arc is improved as:
$$l_{it}=\max\{L_t-\sum_{p \in P_i} U_p,0\} \qquad \forall (i,t) \in A, i \in I, t \in T.$$
The upper bound of this arc is improved as:
$$u_{it}=\min \{U_i,U_t\} \qquad \forall (i,t) \in A, i \in I, t \in T.$$
\STATE The lower bound of each pool-to-pool arc is calculated by:
$$l_{ij}=\max \{L_i - \sum_{(i,t) \in A, t \in T} U_t, 0\} \qquad \forall (i,j)\in A, i,j \in I,$$
To improve the upper bound of this arc, we calculate the following:
$$u_{ij}=U_i-\sum_{(i,t) \in A, t \in T} l_{it} \qquad \forall (i,j)\in A, i,j \in I,$$
In addition to the amount of supply surplus after meeting each demand. The upper bound of each pool-to-pool arc equals the minimum between the supply surplus and $u_{ij}$.
\STATE The bounds of each pool equals the summation of the bounds of all its incoming arcs;
$$L_i=\sum_{j \in N^-_i} l_{ji}~\text{ and }~U_i=\sum_{j \in N^-_i} u_{ji} \qquad \forall i \in I.$$
\end{algorithmic}
\end{algorithm}

\section{Computations}\label{sec:computations}
In this section, we present the results of our experiments on two different sets of generalized pooling problem instances. First, we consider 13 well-known standard pooling problem instances from the literature \citep{haverly1978studies,adhya1999lagrangian,foulds1992bilinear,ben1994global}. We have generalized them by adding the arcs $(i, j)$ and $(j, i)$ for each pair of pools $i, j \subseteq I$, where $i \ne j$ \citep{alfaki2013multi}. Second, we use the data of the real-world mining problem instances based on the work of \cite{boland2015special}. In what follows, we report the results of the exact methods based on the original {\it Source-Based} and {\it Terminal-Based} rank formulations as well as the outcomes of the experiments with  different types of relaxations, restrictions, and valid inequalities which we discussed in Sections \ref{sec:formulations} and \ref{sec:solution}. We perform all the experiments with and without the bound tightening and report the results separately. Table \ref{tab:computationalMethods} shows the methods and notations we use.\\

\begin{table}[H]
  \centering
  \caption{Computational Methods}
  \scalebox{0.9}{
    \begin{tabular}{clcc}
    \toprule
    \multicolumn{2}{c}{\multirow{2}[4]{*}{Method}} & \multicolumn{2}{c}{Notation} \\
\cmidrule{3-4}    \multicolumn{2}{c}{} & \textit{Source-Based} & \textit{Terminal-Based} \\
    \midrule
    \multirow{4}[2]{*}{LP Relaxations} & \textit{Column-wise} &  $\mathcal{F}_1^\mathcal{S}$   & $\mathcal{F}_2^\mathcal{T}$ \\
          & \textit{Row-wise} & $\mathcal{F}_2^\mathcal{S}$    & $\mathcal{F}_1^\mathcal{T}$ \\
          & \textit{Row-wise} $\cap$ \textit{Column-wise} & $\mathcal{F}_3^\mathcal{S}$    & $\mathcal{F}_3^\mathcal{T}$ \\
          & \textit{Row-column*} & $\mathcal{F}_4^\mathcal{S}$    & $\mathcal{F}_4^\mathcal{T}$ \\
\cmidrule{1-4}    \multirow{2}[2]{*}{MIP Relaxations} & Discretizing $q$ considering $X$'s column-sum & $\mathcal{M}_1^\mathcal{S}(H)$   & $\mathcal{M}_2^\mathcal{T}(H)$ \\
          & Discretizing $q$ considering $X$'s row-sum & $\mathcal{M}_2^\mathcal{S}(H)$   & $\mathcal{M}_1^\mathcal{T}(H)$ \\
\cmidrule{1-4}    \multirow{2}[2]{*}{Valid Inequalities} & Obtained by multiplication of \eqref{eq:extt1} and \eqref{eq:extt2}* & $\mathcal{V}_{ab}^\mathcal{S}$  & $\mathcal{V}_{ab}^\mathcal{T}$ \\
          & Obtained by multiplication of \eqref{eq:extt1} and \eqref{eq:extt3}* & $\mathcal{V}_{ac}^\mathcal{S}$  & $\mathcal{V}_{ac}^\mathcal{T}$ \\
    \midrule
    \multicolumn{4}{c}{* Developed in this paper}\\
    \end{tabular}}%
  \label{tab:computationalMethods}%
\end{table}%

All the experiments are implemented in Python 3.7, and optimization problems are solved by Gurobi 9.1.1 on an Intel(R) 3.7 GHz processor and 64 GB RAM workstation. {The time limit for each experiment is set to one hour}. Also, we have utilized the Python \texttt{JobLib} package to perform   OBBT in parallel for each pair of $(i,j) \in A$ and node $i \in N$.

\subsection{Literature Instances}
In this section, we focus on the instances from the literature and perform different experiments. Table \ref{tab:litInsChar} shows the total number of input, pool, and output nodes as well as the total number of arcs and specifications in each of the generalized instances from the literature.

\subsubsection{Exact Formulations}
First, we solve these instances with the original {\it Source-Based} and {\it Terminal-Based} formulations. Table \ref{tab:sumLitExact} shows these results (additional details in Appendix, Tables \ref{tab:litSBexact} and \ref{tab:litTBexact}). 

\begin{table}[H]
  \centering
  \caption{Literature Instances: Exact Formulations}
  \scalebox{0.9}{
    \begin{tabular}{ccccccc}
    \toprule
    \multirow{2}[4]{*}{Formulation} & \multicolumn{2}{c}{Gurobi without OBBT} &       & \multicolumn{3}{c}{Gurobi with OBBT} \\
\cmidrule{2-3}\cmidrule{5-7}          & Time  & \%$O$-Gap &       & Prep. Time* & Time  & \%$O$-Gap \\
    \midrule
    \textit{Source-Based} & 1117.10 & 0.66\% &       & 18.62 & 185.45 & 0.00\% \\
    \textit{Terminal-Based} & \textbf{277.43} & 0.67\% &       & 18.62 & \textbf{2.65}  & 0.00\% \\
    \midrule
    \multicolumn{7}{c}{* Preprocessing Time} \\
    \end{tabular}}%
  \label{tab:sumLitExact}%
\end{table}%

This table presents the running time and the optimality gap ($O$-Gap) when using the Gurobi solver to obtain the `Exact' solution for the literature instances. It compares the results with and without the OBBT technique. The preprocessing time refers to the overall time taken to compute the bounds for the original objective value plus the OBBT processing time.

In OBBT, to obtain a lower bound for the objective value, we have solved the original {\it Terminal-Based} formulation in which we have relaxed the bilinear constraint and refer to it as the Multi-Commodity Flow (MCF) formulation in the rest of the paper. In addition, to get an upper bound, we have solved the restriction $\mathcal{G}_2^\mathcal{T}(H=3)$ developed by \cite{dey2020convexifications} for all the instances.

According to the table, the OBBT technique helps to improve the running time and the optimality gap for most of the instances for both formulations. In terms of the running time, the {\it Terminal-Based} formulation performs better than the {\it Source-based} formulation even without the bound tightening. Additionally, in the {\it Terminal-Based} formulation, the Gurobi is able to close the gap in a much shorter time than the previous formulation.

Generally, we can say that performing the OBBT technique on the generalized version of literature instances is advantageous on average, and the time and optimality gap improvements are more significant for the {\it Terminal-Based} formulation.

\subsubsection{Linear Programming Relaxations}
In this section, we investigate the performance of different relaxations we have explained for the pooling problem instances of the literature. Also, we report the results of these methods with their original bounds and with the improved bounds to evaluate the effect of OBBT on these outer approximations.

\begin{table}[H]
  \centering
  \caption{Literature Instances \textbf{without OBBT} (LP Relaxations)}
  \scalebox{0.9}{
    \begin{tabular}{cccccccccccc}
    \toprule
    \multirow{2}[4]{*}{Formulation} & \multicolumn{2}{c}{$\mathcal{F}_1$} &       & \multicolumn{2}{c}{$\mathcal{F}_2$} &       & \multicolumn{2}{c}{$\mathcal{F}_3$} &       & \multicolumn{2}{c}{$\mathcal{F}_4$} \\
\cmidrule{2-3}\cmidrule{5-6}\cmidrule{8-9}\cmidrule{11-12}          & Time & \% $D$-Gap &       & Time & \% $D$-Gap &       & Time & \% $D$-Gap &       & Time & \% $D$-Gap \\
    \midrule
    \textit{Source-Based} & 0.04  & 15.65\% &       & 0.03  & 15.72\% &       & 0.15  & 15.65\% &       & 0.15  & 15.65\% \\
    \textit{Terminal-Based} & \textbf{0.02}  & 15.65\% &       & \textbf{0.02}  & \textbf{14.60\%} &       & \textbf{0.03}  & \textbf{14.53\%} &       & \textbf{0.03}  & \textbf{14.53\%} \\
    \bottomrule
    \end{tabular}}%
  \label{tab:litLPnoOBBT}%
\end{table}%

Table \ref{tab:litLPnoOBBT} shows the results of the LP relaxations without the bound improvements (additional details in Appendix, Tables \ref{tab:litSBLPnoOBBT} and \ref{tab:litTBLPnoOBBT}). This table indicates the running time and the duality gap ($D$-Gap). To calculate this gap, we consider the objective values of the `Exact' obtained by `Gurobi with OBBT' with $0.00\%$ optimality gap as the upper bound ($UB$) and the bounds obtained by the relaxations as the lower bound ($LB$) and use the following equation:
\begin{equation} \label{eq:Gap}
    \text{Gap}=\frac{UB-LB}{|UB|} \times 100
\end{equation}
In general, the average running time of the LP relaxations is much smaller than `Exact'. Without performing OBBT, all the LP relaxations of both formulations yield almost the same duality gap while the {\it Terminal-Based} formulation is able to give slightly better duality gap percentages. As we can see, $\mathcal{F}_3$ is better than $\mathcal{F}_1$ and $\mathcal{F}_2$, and interestingly it is equal to $\mathcal{F}_4$ here.

\begin{table}[H]
  \centering
  \caption{Literature Instances \textbf{with OBBT} (LP Relaxations)}
  \scalebox{0.9}{
    \begin{tabular}{cccccccccccc}
    \toprule
    \multirow{2}[4]{*}{Formulation} & \multicolumn{2}{c}{$\mathcal{F}_1$} &       & \multicolumn{2}{c}{$\mathcal{F}_2$} &       & \multicolumn{2}{c}{$\mathcal{F}_3$} &       & \multicolumn{2}{c}{$\mathcal{F}_4$} \\
\cmidrule{2-3}\cmidrule{5-6}\cmidrule{8-9}\cmidrule{11-12}          & Time & \% $D$-Gap &       & Time & \% $D$-Gap &       & Time & \% $D$-Gap &       & Time & \% $D$-Gap \\
    \midrule
    \textit{Source-Based} & 0.05  & 7.99\% &       & \textbf{0.01}  & \textbf{4.96\%} &       & \textbf{0.02}  & 4.36\% &       & 0.03  & 4.35\% \\
    \textit{Terminal-Based} & \textbf{0.01}  & \textbf{4.96\%} &       & 0.02  & 7.99\% &       & 0.03  & 4.36\% &       & 0.03  & 4.35\% \\
    \bottomrule
    \end{tabular}}%
  \label{tab:litLPOBBT}%
\end{table}%

In Table \ref{tab:litLPOBBT}, we can see the results of the LP relaxations of the {\it Source-Based} and {\it Terminal-Based} formulations with OBBT (additional details in Appendix, Tables \ref{tab:litSBLPOBBT} and \ref{tab:litTBLPOBBT}). According to the results, the {\it column-wise} relaxations of the {\it Source-Based} and {\it Terminal-Based} formulations ($\mathcal{F}_1^\mathcal{S}$ and $\mathcal{F}_2^\mathcal{T}$, respectively) have the same performance while solving the literature instances of the pooling problem and give the duality gap percentage of $7.99$ on average. Identically, the {\it row-wise} relaxations of these two formulations ($\mathcal{F}_2^\mathcal{S}$ and $\mathcal{F}_1^\mathcal{T}$) give the same solution qualities with the average duality gap percentage of $4.96$. In addition, the intersection of the {\it row-wise} and {\it column-wise} ($\mathcal{F}_3$) and the {\it row-column} relaxation ($\mathcal{F}_4$) perform better than the previous LP relaxations which just consider imposing bounds on the row-sum or the column-sum. Moreover, $\mathcal{F}_3$ and $\mathcal{F}_4$ are interestingly equal for this data set.

We can realize that performing OBBT on the literature instances before utilizing the LP relaxations to solve them improves the duality gap significantly and this improvement is more significant than that of the `Exact' solutions. 

\subsubsection{Discretization Relaxations}
In this section, we evaluate the performance of the MIP relaxations in solving the generalized version of the literature instances. We compare the results of these methods before and after applying the OBBT  technique while discretizing the variable $q$. \cite{dey2020convexifications} have investigated the impact of the different discretization levels $H=1,\dots,5$ and shown that a good choice for the pooling problem that balances accuracy and computational effort is $H=3$. Therefore, we have considered the same level to run the experiments using MIP relaxations and restrictions. 

\begin{table}[H]
  \centering
  \caption{Literature Instances: MIP Relaxations ($H=3$)}
  \scalebox{0.9}{
    \begin{tabular}{cccccccccccc}
    \toprule
    \multirow{2}[4]{*}{OBBT} & \multicolumn{2}{c}{$\mathcal{M}_1^\mathcal{S}(H)$} &       & \multicolumn{2}{c}{$\mathcal{M}_2^\mathcal{S}(H)$} &       & \multicolumn{2}{c}{$\mathcal{M}_1^\mathcal{T}(H)$} &       & \multicolumn{2}{c}{$\mathcal{M}_2^\mathcal{T}(H)$} \\
\cmidrule{2-3}\cmidrule{5-6}\cmidrule{8-9}\cmidrule{11-12}          & Time & \% $D$-Gap &       & Time & \% $D$-Gap &       & Time & \% $D$-Gap &       & Time & \% $D$-Gap \\
    \midrule
    \textit{No} & 17.38  & 8.35\% &       & 136.38  & 0.84\% &       & \textbf{0.12}  & 0.61\% &       & 0.14  & 0.14\% \\
    \textit{Yes} & \textbf{0.29}  & \textbf{0.24\%} &       & \textbf{4.06}  & \textbf{0.11\%} &       & 0.14  & \textbf{0.24\%} &       & 0.14  & \textbf{0.11\%} \\
    \bottomrule
    \end{tabular}}%
  \label{tab:sumlitMIPrelax}%
\end{table}%

Table \ref{tab:sumlitMIPrelax} shows the results of the different discretization relaxations for the pooling problem instances of the literature with and without OBBT (additional details in Appendix, Tables \ref{tab:litMIPrelaxnoOBBT} and \ref{tab:litMIPrelaxOBBT}). In case of having no OBBT, $\mathcal{M}_1^\mathcal{S}(H)$ cannot perform as well as the others in terms of the solution quality and gives an average gap percentage of $8.35$ for all the instances. In terms of the running time, the MIP relaxations of the {\it Source-Based} formulation are not as strong as those of the {\it Terminal-Based} formulation and take more time to solve the problems. This difference is more significant when comparing  $\mathcal{M}_2^\mathcal{S}(H)$ to the others.

However, we observe that OBBT helps the MIP relaxations of the {\it Source-Based} formulation to have remarkable improvements in terms of the running time and the duality gap on average as well (additional details in Appendix, Table \ref{tab:litMIPrelaxOBBT}).

Regarding the use of discretization relaxations for the literature instances, OBBT does not make remarkable improvements for the relaxations of the {\it Terminal-Based} formulation since it has a good performance already. Utilizing this bound improvement method is more beneficial for the relaxations of the {\it Source-Based} formulation. It helps the model to obtain better bounds in a shorter time. 

Generally, MIP relaxations are stronger than the LP methods on average, but they are computationally more expensive and need more time to reach high-quality dual bounds.

\subsection{Mining Instances}
In this section, we report the results of applying different methods we have described previously to solve real-world cases of the mining problem. We have converted these problems to the generalized pooling problem by the instructions in Section \ref{sec:miningTopoolingInstruction}. This set consists of yearly, half-yearly, and quarterly planning time horizons. Table \ref{tab:MiningInsChar} shows the characteristics of the different instances in this dataset. This table shows the number of sources, pools, and terminals plus the overall number of arcs. The number of source-to-pool arcs is indicated by $|ASI|$, and we have used similar notations for pool-to-pool and pool-to-terminal arcs. The number of specifications is the same for all the instances. Supply of the raw materials is coming to two stockpiles indicated by $SP 1$ and $SP 2$ at different time points. 

The supplies of the raw materials must be blended in the pools and mixed again in the output points to meet the demand amount with certain specification requirements. There are four specifications; ash, moisture, sulfur, and volatile, which should not violate the maximum preferable amount specified by the customers. Otherwise, the supplier will be penalized by a contractually agreed amount, and the objective is to minimize this penalty.

\subsubsection{Exact Formulations}
Table~\ref{tab:sumMiningExactnoBound} shows the `Exact' objective value of solving the mining instances (additional details in Appendix, Tables \ref{tab:MiningExactnoBound} and \ref{tab:MiningExactBound}).

\begin{table}[H]
  \centering
  \caption{Mining Instances: Exact Formulations}
  \scalebox{0.9}{
    \begin{tabular}{ccccccc}
    \toprule
    \multirow{2}[4]{*}{Formulation} & \multicolumn{2}{c}{Gurobi} &       & \multicolumn{3}{c}{Gurobi with Bounds} \\
\cmidrule{2-3}\cmidrule{5-7}          & Time  & \%$O$-Gap &       &  Time  & \%$O$-Gap \\
    \midrule
    \textit{Source-Based} & 1384.84 & 3.13\% &       &  1767.06 & 2.79\% \\
    \textit{Terminal-Based} & 1690.87 & \textbf{1.25\%} &       &  1520.46  & \textbf{0.88\%} \\
    \bottomrule
    \end{tabular}}%
  \label{tab:sumMiningExactnoBound}%
\end{table}%

According Table~\ref{tab:sumMiningExactnoBound}, while using the {\it Source-Based} formulation, the optimality gap and the running time of the Gurobi are $3.13\%$ and $1457.08$ on average, respectively. On the other hand, the {\it Terminal-Based} formulation finds the solutions with the optimality gap of $1.25\%$ in the running time of $1690.87$ on average.
Additionally, the average optimality gap of the {\it Source-Based} formulation, while having updated bounds, has decreased. Moreover, updating the bounds of the problem has a positive impact on the running time and the optimality gap of Gurobi while experimenting with the {\it Terminal-Based} formulation.

\subsubsection{Linear Programming Relaxations}
In this section, we utilize the LP relaxations to deal with the generalized pooling problem counterpart of the mining problem instances.

\cite{boland2015special} have used the McCormick envelopes to obtain dual bounds of the mining instances. They have also modeled and solved the mining problem (in addition to its generalized pooling problem counterpart) and reported the best-known primal bounds for these instances. The authors have calculated the duality gap of their relaxation based on the primal bounds they have obtained. Since their primal bounds are cheaper and to have comparable results, we have reported their gap in the tables consisting of the results of our relaxations and calculated the duality gap of the results based on their primal bounds.

\begin{table}[H]
  \centering
  \caption{Mining Instances \textbf{without Bounds}: LP Relaxations}
  \scalebox{0.8}{
    \begin{tabular}{cccccccccccccc}
    \toprule
    \multirow{2}[4]{*}{Formulation} & $D$-Gap   &       & \multicolumn{2}{c}{$\mathcal{F}_1$} &       & \multicolumn{2}{c}{$\mathcal{F}_2$} &       & \multicolumn{2}{c}{$\mathcal{F}_3$} &       & \multicolumn{2}{c}{$\mathcal{F}_4$} \\
\cmidrule{4-5}\cmidrule{7-8}\cmidrule{10-11}\cmidrule{13-14}          & (Boland) &       & Time & \% $D$-Gap &       & Time & \% $D$-Gap &       & Time & \% $D$-Gap &       & Time & \% $D$-Gap \\
    \midrule
    \textit{Source-Based} & 19\%  &       & 3.37  & 7.18\% &       & 3.77  & 7.18\% &       & \textbf{3.06}  & 7.18\% &       & \textbf{1.91}  & 7.18\% \\
    \textit{Terminal-Based} & 19\%  &       & \textbf{2.53}  & 7.18\% &       & \textbf{2.25}  & 7.18\% &       & 25.24  & 7.18\% &       & 5.16  & 7.18\% \\
    \bottomrule
    \end{tabular}}%
  \label{tab:MiningLPrelaxnoBound}%
\end{table}%

Table \ref{tab:MiningLPrelaxnoBound} shows the results of the LP relaxations of the {\it Source-Based} and {\it Terminal-Based} formulations without performing the bound tightening method (additional details in Appendix, Tables \ref{tab:MiningSBLPrelaxnoBound} and \ref{tab:MiningTBLPrelaxnoBound}). The results show that without performing the bound improvement methods, all the LP relaxations of the two multi-commodity flow formulations have the same performance and give identical bounds. These bounds are significantly stronger than those reported by \cite{boland2015special}. This may be true for the mining problem as a special case of the generalized pooling problem in which the supply and the demand are placed at different points of time.

We have followed the steps defined in Section \ref{sec:miningTopoolingInstruction} to improve the bounds of different nodes and arcs in the mining problem. These steps are cheap and simple, and since they do not require solving optimization problems, they have negligible preprocessing time. Thus, unlike OBBT, we do not report the preprocessing time in this case. 

\begin{table}[H]
  \centering
  \caption{Mining Instances \textbf{with Bounds} (LP Relaxations)}
  \scalebox{0.8}{
    \begin{tabular}{cccccccccccccc}
    \toprule
    \multirow{2}[4]{*}{Formulation} & $D$-Gap   &       & \multicolumn{2}{c}{$\mathcal{F}_1$} &       & \multicolumn{2}{c}{$\mathcal{F}_2$} &       & \multicolumn{2}{c}{$\mathcal{F}_3$} &       & \multicolumn{2}{c}{$\mathcal{F}_4$} \\
\cmidrule{4-5}\cmidrule{7-8}\cmidrule{10-11}\cmidrule{13-14}          & (Boland) &       & Time & \% $D$-Gap &       & Time & \% $D$-Gap &       & Time & \% $D$-Gap &       & Time & \% $D$-Gap \\
    \midrule
    \textit{Source-Based} & 19\%  &       & 3.55  & 5.12\% &       & 3.92  & \textbf{4.98\%} &       & 3.99  & 4.01\% &       & 2.75  & 3.94\% \\
    \textit{Terminal-Based} & 19\%  &       & \textbf{2.14}  & \textbf{3.38\%} &       & \textbf{2.60}  & 6.32\% &       & \textbf{2.40}  & \textbf{3.05\%} &       & 2.92  & \textbf{2.98\%} \\
    \bottomrule
    \end{tabular}}%
  \label{tab:MiningLPrelaxBound}%
\end{table}%

Table \ref{tab:MiningLPrelaxBound} indicates that performing the bound tightening method improves the quality of the LP relaxations (detailed results in Tables \ref{tab:MiningSBLPrelaxBound} and \ref{tab:MiningTBLPrelaxBound}). As we can see, $\mathcal{F}_3^\mathcal{S}$ and $\mathcal{F}_4^\mathcal{S}$ give better dual bounds. Recall that our proposed LP relaxation $\mathcal{F}_4^\mathcal{S}$ considers bounds on the row-sum and the column-sum of the decomposed flow variables matrices of the pools simultaneously. As we can see from the table, this relaxation outperforms the others and gives stronger dual bounds.

In addition, the duality gap of $\mathcal{F}_3^\mathcal{T}$ and $\mathcal{F}_4^\mathcal{T}$ are $1\%$ better than those of the {\it Source-Based} formulation respectively. Therefore, we can say that in both cases of using the updated bounds and without them, the {\it row-column} relaxation is the best choice to obtain the dual bounds of the mining instances since it is at least as good as the others and gives better dual bounds while using the updated bounds.

\subsubsection{Discretization Relaxations}
We evaluate the performance of the discretization methods to obtain outer approximations of the mining problems in this section. Table \ref{tab:miningMIPrelax} shows the results of different MIP relaxations, which discretize the variable $q$ at the discretization level $H=3$ for the mining problems (detailed results in Tables \ref{tab:MiningMIPRelaxnoBound} and \ref{tab:MiningMIPRelaxBound}). As discussed previously, these MIP methods are generally stronger than LP relaxations, but they need much more time to give high-quality solutions. The results confirm this fact, and we can see that the running time of the discretization relaxations for the mining problems is not as short as those of the LP relaxations, but the duality gap they give is better. To calculate this duality gap, similar to the LP relaxations, we have considered the best primal bounds reported by \cite{boland2015special}. Meanwhile, without the updated bounds, $\mathcal{M}_1^\mathcal{S}(H)$ performs better than the others in terms of the solution quality and average duality gap.

\begin{table}[H]
  \centering
  \caption{Mining Instances (MIP Relaxations ($H=3$))}
  \scalebox{0.9}{
    \begin{tabular}{cccccccccccc}
    \toprule
    \multirow{2}[4]{*}{Bounds} & \multicolumn{2}{c}{$\mathcal{M}_1^\mathcal{S}(H)$} &       & \multicolumn{2}{c}{$\mathcal{M}_2^\mathcal{S}(H)$} &       & \multicolumn{2}{c}{$\mathcal{M}_1^\mathcal{T}(H)$} &       & \multicolumn{2}{c}{$\mathcal{M}_2^\mathcal{T}(H)$} \\
\cmidrule{2-3}\cmidrule{5-6}\cmidrule{8-9}\cmidrule{11-12}          & Time & \% $D$-Gap &       & Time & \% $D$-Gap &       & Time & \% $D$-Gap &       & Time & \% $D$-Gap \\
    \midrule
    \textit{No} & 1162.50  & 2.15\% &       & 1211.92  & 3.73\% &       & 1889.89  & 3.33\% &       & 1010.25  & 2.53\% \\
    \textit{Yes} & 1167.92  & \textbf{1.41\%} &       & 1339.23  & \textbf{2.48\%} &       & \textbf{1064.11}  & \textbf{1.06\%} &       & 1141.55  & \textbf{2.18\%} \\
    \bottomrule
    \end{tabular}}%
  \label{tab:miningMIPrelax}%
\end{table}%

The results of the MIP relaxations in conjunction with the bound tightening indicate that improving the bounds of arcs and nodes of the mining problem has a positive impact on the dual bound obtained by the discretization relaxations. The discretization method $\mathcal{M}_1^\mathcal{T}(H)$ has not only improved the duality gap significantly but also the running time is much less than the case of having no updated bounds. The rest of the methods have improved the relaxation quality by making use of the bound tightening method within almost the same amount of average running time. For some instances, the MIP relaxations are running out of the time limit of one hour, which is the cause of the large average of time needed to obtain a high-quality dual bound.

\subsubsection{Valid Inequalities}
\label{sec:comp-valid}
We developed some valid inequalities in Section \ref{sec:validIneq}, which have the lower bounds of the pools as the denominator of a fraction. These lower bounds exist in the mining problem, and we can improve them. However, for the literature instances, they do not exist generally. Therefore, we only evaluate the new valid inequalities' performance with the mining instances. In this section, we aim to evaluate the performance of adding the valid inequalities to the {\it row-wise}, {\it column-wise}, and their intersection as well as the {\it row-column} relaxations of the {\it Source-Based} and {\it Terminal-Based} formulations separately. We report the results of the valid inequalities $\mathcal{V}_{ab}$ and $\mathcal{V}_{ac}$ since they have reasonable performance in the mining instances.

\begin{table}[H]
  \centering
  \caption{Average Running Time and Duality Gap with Bound Tightening ({\it Source-Based}: LP+Valid Inequalities)}
  \scalebox{0.9}{
    \begin{tabular}{ccccccccccccc}
    \toprule
    \multirow{2}[4]{*}{Valid Ineq.} &       & \multicolumn{2}{c}{$\mathcal{F}_1^\mathcal{S}$} &       & \multicolumn{2}{c}{$\mathcal{F}_2^\mathcal{S}$} &       & \multicolumn{2}{c}{$\mathcal{F}_3^\mathcal{S}$} &       & \multicolumn{2}{c}{$\mathcal{F}_4^\mathcal{S}$} \\
\cmidrule{3-4}\cmidrule{6-7}\cmidrule{9-10}\cmidrule{12-13}          &       & Time & \% $D$-Gap &       & Time & \% $D$-Gap &       & Time & \% $D$-Gap &       & Time & \% $D$-Gap \\
    \midrule
    -     &       & 3.55  & 5.12\% &       & 3.92  & 4.98\% &       & 3.99  & 4.01\% &       & 2.75  & 3.94\% \\
    $\mathcal{V}_{ab}^\mathcal{S}$  &       & 2.60  & \textbf{4.70\%} &       & 2.51  & \textbf{4.54\%} &       & 2.79  & \textbf{3.92\%} &       & 4.32  & \textbf{3.88\%} \\
    $\mathcal{V}_{ac}^\mathcal{S}$  &       & 2.22  & 5.04\% &       & 2.40  & 4.97\% &       & 2.17  & 4.00\% &       & 4.10  & 3.93\% \\
    \bottomrule
    \end{tabular}}%
  \label{tab:summiningSBLP+Valid}%
\end{table}%

Table \ref{tab:summiningSBLP+Valid} shows the running time and the duality gap of adding the valid inequalities to the LP relaxations of the \textit{Source-Based} formulation (additional details in Appendix, Tables \ref{tab:MiningSBLP+6a6bBound} and \ref{tab:MiningSBLP+6a6cBound}). We have used the updated bounds of the arcs and nodes obtained by the proposed bound-tightening method. We observe that the best performance of the relaxations is achieved while adding the $\mathcal{V}_{ab}^{\mathcal{S}}$. However, adding the $\mathcal{V}_{ac}^{\mathcal{S}}$ has a positive effect on improving the duality gap of the {\it Source-Based} formulation. Furthermore, as we expected, the {\it row-column} relaxation also gives the highest quality dual bounds in this case.

To obtain high-quality dual bounds for the generalized pooling problem counterpart of the mining problem instances, we can make use of this addition as it can yield less duality gap than the LP relaxations in the same average amount of running time.

\begin{table}[H]
  \centering
  \caption{Average Running Time and Duality Gap with Bound Tightening ({\it Terminal-Based}: LP+Valid Inequalities)}
  \scalebox{0.9}{
    \begin{tabular}{ccccccccccccc}
    \toprule
    \multirow{2}[4]{*}{Valid Ineq.} &       & \multicolumn{2}{c}{$\mathcal{F}_1^\mathcal{T}$} &       & \multicolumn{2}{c}{$\mathcal{F}_2^\mathcal{T}$} &       & \multicolumn{2}{c}{$\mathcal{F}_3^\mathcal{T}$} &       & \multicolumn{2}{c}{$\mathcal{F}_4^\mathcal{T}$} \\
\cmidrule{3-4}\cmidrule{6-7}\cmidrule{9-10}\cmidrule{12-13}          &       & Time & \% $D$-Gap &       & Time & \% $D$-Gap &       & Time & \% $D$-Gap &       & Time & \% $D$-Gap \\
    \midrule
    -     &       & 2.14  & 3.38\% &       & 2.60  & 6.32\% &       & 2.40  & 3.05\% &       & 2.92  & 2.98\% \\
    $\mathcal{V}_{ab}^\mathcal{T}$  &       & 3.43  & \textbf{3.26\%} &       & 3.00  & \textbf{4.38\%} &       & 4.18  & \textbf{3.03\%} &       & 6.70  & \textbf{2.96\%} \\
    $\mathcal{V}_{ac}^\mathcal{T}$  &       & 1.81  & 3.38\% &       & 2.10  & 6.32\% &       & 1.54  & 3.05\% &       & 4.45  & 2.98\% \\
    \bottomrule
    \end{tabular}}%
  \label{tab:summiningTBLP+Valid}%
\end{table}%

Table \ref{tab:summiningTBLP+Valid} summarizes the results of LP relaxations of the {\it Terminal-Based} formulation while we add valid inequalities to them (additional details in Appendix, Tables \ref{tab:MiningTBLP+6a6bBound} and \ref{tab:MiningTBLP+6a6cBound}). As shown in the table, the best dual bound of the relaxations is obtained in addition to $\mathcal{V}_{ab}^{\mathcal{T}}$. For the {\it Terminal-Based} formulation, adding $\mathcal{V}_{ac}^{\mathcal{T}}$ does not improve the dual bound quality, and it remains the same as the case without any added valid inequalities. In three different cases shown in the table, the best dual bounds are obtained while using the {\it row-column} relaxation.

\section{Conclusion}\label{sec:conclusion}

{
In this paper, we focused on the pooling problem, a challenging nonlinear and nonconvex network flow problem. Our analysis focused on a recently proposed rank-one-based formulation of the problem. Firstly, we proved that the convex hull of a recurring substructure in the formulation defined as the set of nonnegative, rank-one matrices with bounded row sums, column sums, and the overall sum is second-other cone representable. Secondly, we introduced novel linear programming relaxations based on this analysis, which outperform existing approaches. Thirdly, we derived   valid inequalities using the Reformulation Linearization Technique to further  strengthen the dual bounds. Finally, to improve the bounds on node and arc capacities, we utilized Optimization-Based Bound Tightening for generic problem instances, and a simple and cost-effective bound-tightening method tailored for time-indexed pooling problem instances. Computational experiments on some benchmark instances showed that our approach has the potential of producing accurate and efficient results thanks to the improved formulations and bound tightening techniques.
}

\section*{Acknowledgments}

The authors thank The Scientific and Technological Research Council of Turkey (TÜBİTAK) for supporting this work (project number: 119M855).

\section*{Statement of Conflict of Interest}
The authors declare that they have no known competing financial interests or personal relationships that could have appeared to influence the work reported in this paper.

\bibliographystyle{plainnat}
\bibliography{Rankone-bib}

\begin{thebibliography}{36}
\providecommand{\natexlab}[1]{#1}
\providecommand{\url}[1]{\texttt{#1}}
\expandafter\ifx\csname urlstyle\endcsname\relax
  \providecommand{\doi}[1]{doi: #1}\else
  \providecommand{\doi}{doi: \begingroup \urlstyle{rm}\Url}\fi

\bibitem[Adhya et~al.(1999)Adhya, Tawarmalani, and
  Sahinidis]{adhya1999lagrangian}
Nilanjan Adhya, Mohit Tawarmalani, and Nikolaos~V Sahinidis.
\newblock A {L}agrangian approach to the pooling problem.
\newblock \emph{Industrial \& Engineering Chemistry Research}, 38\penalty0
  (5):\penalty0 1956--1972, 1999.

\bibitem[Al-Khayyal and Falk(1983)]{al1983jointly}
Faiz~A Al-Khayyal and James~E Falk.
\newblock Jointly constrained biconvex programming.
\newblock \emph{Mathematics of Operations Research}, 8\penalty0 (2):\penalty0
  273--286, 1983.

\bibitem[Alfaki and Haugland(2011)]{alfaki2011comparison}
Mohammed Alfaki and Dag Haugland.
\newblock Comparison of discrete and continuous models for the pooling problem.
\newblock In \emph{11th workshop on algorithmic approaches for transportation
  modelling, optimization, and systems}. Schloss Dagstuhl-Leibniz-Zentrum fuer
  Informatik, 2011.

\bibitem[Alfaki and Haugland(2013{\natexlab{a}})]{alfaki2013multi}
Mohammed Alfaki and Dag Haugland.
\newblock A multi-commodity flow formulation for the generalized pooling
  problem.
\newblock \emph{Journal of Global Optimization}, 56\penalty0 (3):\penalty0
  917--937, 2013{\natexlab{a}}.

\bibitem[Alfaki and Haugland(2013{\natexlab{b}})]{alfaki2013strong}
Mohammed Alfaki and Dag Haugland.
\newblock Strong formulations for the pooling problem.
\newblock \emph{Journal of Global Optimization}, 56\penalty0 (3):\penalty0
  897--916, 2013{\natexlab{b}}.

\bibitem[Audet et~al.(2004)Audet, Brimberg, Hansen, Digabel, and
  Mladenovi{\'c}]{audet711pooling}
Charles Audet, Jack Brimberg, Pierre Hansen, S{\'e}bastien~Le Digabel, and
  Nenad Mladenovi{\'c}.
\newblock Pooling problem: Alternate formulations and solution methods.
\newblock \emph{Management science}, 50\penalty0 (6):\penalty0 761--776, 2004.

\bibitem[Baltean-Lugojan and Misener(2018)]{baltean2018piecewise}
Radu Baltean-Lugojan and Ruth Misener.
\newblock Piecewise parametric structure in the pooling problem: from sparse
  strongly-polynomial solutions to np-hardness.
\newblock \emph{Journal of Global Optimization}, 71\penalty0 (4):\penalty0
  655--690, 2018.

\bibitem[Ben-Tal and Nemirovski(2001)]{NemirovskiNotes}
A.~Ben-Tal and A.~Nemirovski.
\newblock \emph{Lectures on Modern Convex Optimization}.
\newblock Society for Industrial and Applied Mathematics, 2001.

\bibitem[Ben-Tal et~al.(1994)Ben-Tal, Eiger, and Gershovitz]{ben1994global}
Aharon Ben-Tal, Gideon Eiger, and Vladimir Gershovitz.
\newblock Global minimization by reducing the duality gap.
\newblock \emph{Mathematical programming}, 63\penalty0 (1):\penalty0 193--212,
  1994.

\bibitem[Boland et~al.(2015)Boland, Kalinowski, Rigterink, and
  Savelsbergh]{boland2015special}
Natashia Boland, Thomas Kalinowski, Fabian Rigterink, and Martin Savelsbergh.
\newblock A special case of the generalized pooling problem arising in the
  mining industry.
\newblock \emph{Optimization Online e-prints}, 2015.

\bibitem[Boland et~al.(2016)Boland, Kalinowski, and Rigterink]{boland2016new}
Natashia Boland, Thomas Kalinowski, and Fabian Rigterink.
\newblock New multi-commodity flow formulations for the pooling problem.
\newblock \emph{Journal of Global Optimization}, 66\penalty0 (4):\penalty0
  669--710, 2016.

\bibitem[Boland et~al.(2017)Boland, Kalinowski, and
  Rigterink]{boland2017polynomially}
Natashia Boland, Thomas Kalinowski, and Fabian Rigterink.
\newblock A polynomially solvable case of the pooling problem.
\newblock \emph{Journal of Global Optimization}, 67\penalty0 (3):\penalty0
  621--630, 2017.

\bibitem[Burer and K{\i}l{\i}n{\c{c}}-Karzan(2017)]{burer2017convexify}
Samuel Burer and Fatma K{\i}l{\i}n{\c{c}}-Karzan.
\newblock How to convexify the intersection of a second order cone and a
  nonconvex quadratic.
\newblock \emph{Mathematical Programming}, 162:\penalty0 393--429, 2017.

\bibitem[Burer and Letchford(2009)]{burer2009nonconvex}
Samuel Burer and Adam~N Letchford.
\newblock On nonconvex quadratic programming with box constraints.
\newblock \emph{SIAM Journal on Optimization}, 20\penalty0 (2):\penalty0
  1073--1089, 2009.

\bibitem[Bynum et~al.(2018)Bynum, Castillo, Watson, and
  Laird]{bynum2018tightening}
Michael Bynum, Anya Castillo, Jean-Paul Watson, and Carl~D Laird.
\newblock Tightening mccormick relaxations toward global solution of the
  {ACOPF} problem.
\newblock \emph{IEEE Transactions on Power Systems}, 34\penalty0 (1):\penalty0
  814--817, 2018.

\bibitem[Coffrin et~al.(2015)Coffrin, Hijazi, and
  Van~Hentenryck]{coffrin2015strengthening}
Carleton Coffrin, Hassan~L Hijazi, and Pascal Van~Hentenryck.
\newblock Strengthening convex relaxations with bound tightening for power
  network optimization.
\newblock In \emph{Principles and Practice of Constraint Programming: 21st
  International Conference, CP 2015, Cork, Ireland, August 31--September 4,
  2015, Proceedings}, pages 39--57. Springer, 2015.

\bibitem[Dey and Gupte(2015)]{dey2015analysis}
Santanu~S Dey and Akshay Gupte.
\newblock Analysis of milp techniques for the pooling problem.
\newblock \emph{Operations Research}, 63\penalty0 (2):\penalty0 412--427, 2015.

\bibitem[Dey et~al.(2020)Dey, Kocuk, and Santana]{dey2020convexifications}
Santanu~S Dey, Burak Kocuk, and Asteroide Santana.
\newblock Convexifications of rank-one-based substructures in {QCQPs} and
  applications to the pooling problem.
\newblock \emph{Journal of Global Optimization}, 77\penalty0 (2):\penalty0
  227--272, 2020.

\bibitem[Foulds et~al.(1992)Foulds, Haugland, and
  J{\o}rnsten]{foulds1992bilinear}
Leslie~Richard Foulds, Dag Haugland, and Kurt J{\o}rnsten.
\newblock A bilinear approach to the pooling problem.
\newblock \emph{Optimization}, 24\penalty0 (1-2):\penalty0 165--180, 1992.

\bibitem[Grothey and McKinnon(2020)]{grothey2020effectiveness}
Andreas Grothey and Ken McKinnon.
\newblock On the effectiveness of sequential linear programming for the pooling
  problem.
\newblock \emph{arXiv preprint arXiv:2002.10899}, 2020.

\bibitem[Gupte et~al.(2017)Gupte, Ahmed, Dey, and Cheon]{gupte2017relaxations}
Akshay Gupte, Shabbir Ahmed, Santanu~S Dey, and Myun~Seok Cheon.
\newblock Relaxations and discretizations for the pooling problem.
\newblock \emph{Journal of Global Optimization}, 67\penalty0 (3):\penalty0
  631--669, 2017.

\bibitem[Gupte et~al.(2019)Gupte, Koster, and Kuhnke]{gupte2019dynamic}
Akshay Gupte, Arie~MCA Koster, and Sascha Kuhnke.
\newblock A dynamic adaptive discretization algorithm for the pooling problem.
\newblock In \emph{International Network Optimization Conference 2019 (INOC
  2019)}, 2019.

\bibitem[Gupte et~al.(2020)Gupte, Kalinowski, Rigterink, and
  Waterer]{gupte2020extended}
Akshay Gupte, Thomas Kalinowski, Fabian Rigterink, and Hamish Waterer.
\newblock Extended formulations for convex hulls of some bilinear functions.
\newblock \emph{Discrete Optimization}, 36:\penalty0 100569, 2020.

\bibitem[Haugland(2016)]{haugland2016computational}
Dag Haugland.
\newblock The computational complexity of the pooling problem.
\newblock \emph{Journal of Global Optimization}, 64\penalty0 (2):\penalty0
  199--215, 2016.

\bibitem[Haugland and Hendrix(2016)]{haugland2016pooling}
Dag Haugland and Eligius~MT Hendrix.
\newblock Pooling problems with polynomial-time algorithms.
\newblock \emph{Journal of Optimization Theory and Applications}, 170\penalty0
  (2):\penalty0 591--615, 2016.

\bibitem[Haverly(1978)]{haverly1978studies}
Co~A Haverly.
\newblock Studies of the behavior of recursion for the pooling problem.
\newblock \emph{Acm sigmap bulletin}, \penalty0 (25):\penalty0 19--28, 1978.

\bibitem[Lasserre et~al.(2017)Lasserre, Toh, and Yang]{lasserre2017bounded}
Jean~B Lasserre, Kim-Chuan Toh, and Shouguang Yang.
\newblock A bounded degree {SOS} hierarchy for polynomial optimization.
\newblock \emph{EURO Journal on Computational Optimization}, 5\penalty0
  (1-2):\penalty0 87--117, 2017.

\bibitem[Li and Vittal(2017)]{li2017convex}
Qifeng Li and Vijay Vittal.
\newblock Convex hull of the quadratic branch {AC} power flow equations and its
  application in radial distribution networks.
\newblock \emph{IEEE Transactions on Power Systems}, 33\penalty0 (1):\penalty0
  839--850, 2017.

\bibitem[Marandi et~al.(2018)Marandi, Dahl, and De~Klerk]{marandi2018numerical}
Ahmadreza Marandi, Joachim Dahl, and Etienne De~Klerk.
\newblock A numerical evaluation of the bounded degree sum-of-squares hierarchy
  of {L}asserre, {T}oh, and {Y}ang on the pooling problem.
\newblock \emph{Annals of Operations Research}, 265\penalty0 (1):\penalty0
  67--92, 2018.

\bibitem[McCormick(1976)]{mccormick1976computability}
Garth~P McCormick.
\newblock Computability of global solutions to factorable nonconvex programs:
  {Part I}--convex underestimating problems.
\newblock \emph{Mathematical programming}, 10\penalty0 (1):\penalty0 147--175,
  1976.

\bibitem[Pham et~al.(2009)Pham, Laird, and El-Halwagi]{pham2009convex}
Viet Pham, Carl Laird, and Mahmoud El-Halwagi.
\newblock Convex hull discretization approach to the global optimization of
  pooling problems.
\newblock \emph{Industrial \& Engineering Chemistry Research}, 48\penalty0
  (4):\penalty0 1973--1979, 2009.

\bibitem[Puranik and Sahinidis(2017)]{puranik2017bounds}
Yash Puranik and Nikolaos~V Sahinidis.
\newblock Bounds tightening based on optimality conditions for nonconvex
  box-constrained optimization.
\newblock \emph{Journal of Global Optimization}, 67:\penalty0 59--77, 2017.

\bibitem[Rahman and Mahajan(2019)]{rahman2019facets}
Hamidur Rahman and Ashutosh Mahajan.
\newblock Facets of a mixed-integer bilinear covering set with bounds on
  variables.
\newblock \emph{Journal of Global Optimization}, 74\penalty0 (3):\penalty0
  417--442, 2019.

\bibitem[Santana and Dey(2020)]{santana2020}
Asteroide Santana and Santanu~S Dey.
\newblock The convex hull of a quadratic constraint over a polytope.
\newblock \emph{SIAM Journal on Optimization}, 30\penalty0 (4):\penalty0
  2983--2997, 2020.

\bibitem[Tawarmalani and Sahinidis(2002)]{tawarmalani2002pooling}
Mohit Tawarmalani and Nikolaos~V Sahinidis.
\newblock The pooling problem.
\newblock In \emph{Convexification and Global Optimization in Continuous and
  Mixed-Integer Nonlinear Programming}, pages 253--283. Springer, 2002.

\bibitem[Tomasgard et~al.(2007)Tomasgard, R{\o}mo, Fodstad, and
  Midthun]{tomasgard2007optimization}
Asgeir Tomasgard, Frode R{\o}mo, Marte Fodstad, and Kjetil Midthun.
\newblock Optimization models for the natural gas value chain.
\newblock In \emph{Geometric modelling, numerical simulation, and
  optimization}, pages 521--558. Springer, 2007.

\end{thebibliography}

\appendix

\section{Computations}\label{App:sec:computations}

\subsection{Literature Instances}

\begin{table}[H]
  \centering
  \caption{Characteristics of the literature instances}
}%
  \label{tab:MiningTBLP+6a6cBound}%
\end{table}%

\end{document}